\documentclass[english,12pt,oneside]{amsproc}
\usepackage[english]{babel}
\usepackage{a4wide}
\usepackage{amsthm}
\usepackage{graphics}
\usepackage{amsfonts, amssymb, amscd, amsmath}
\usepackage{latexsym}
\usepackage[matrix,arrow,curve]{xy}
\usepackage{mathabx}
\usepackage{color}
\usepackage{pbox}
\usepackage{tikz}
\usetikzlibrary{matrix,decorations.pathreplacing,positioning}
\usepackage{hyperref}
\usepackage[normalem]{ulem}

 \DeclareMathOperator{\pt}{pt}

\DeclareMathOperator{\rk}{rk}

\DeclareMathOperator{\cat}{cat}
\DeclareMathOperator{\Vect}{Vect}
\DeclareMathOperator{\Mod}{Mod}

\DeclareMathOperator{\PrePos}{PrePos}
\DeclareMathOperator{\AlexTop}{AlexTop}
\DeclareMathOperator{\FinSets}{FinSets}
\DeclareMathOperator{\Diag}{Diag}
\DeclareMathOperator{\PreShvs}{PreShvs}
\DeclareMathOperator{\Shvs}{Shvs}
\DeclareMathOperator{\Ob}{Ob}
\DeclareMathOperator{\ord}{ord}
\DeclareMathOperator{\Sets}{Sets}
\DeclareMathOperator{\Top}{Top}
\DeclareMathOperator{\hocolim}{hocolim}
\DeclareMathOperator{\op}{op}
\DeclareMathOperator{\link}{link}
\DeclareMathOperator{\Th}{Th}

\DeclareMathOperator{\glue}{glue}
\DeclareMathOperator{\edge}{edge}
\DeclareMathOperator{\ver}{vert}

\newcommand{\simc}{\!\!\sim}

\newcommand{\ko}{\Bbbk}
\newcommand{\Zo}{\mathbb{Z}}

\newcommand{\ca}[1]{\mathcal{#1}}

\newcommand{\dd}{\partial}

\newcommand{\Vv}{\mathcal{V}}

\newcounter{stmcounter}[section]

\numberwithin{equation}{section}

\theoremstyle{plain}
\newtheorem{cor}[stmcounter]{Corollary}

\newtheorem{thm}[stmcounter]{Theorem}

\newtheorem{prop}[stmcounter]{Proposition}
\newtheorem{lem}[stmcounter]{Lemma}

\theoremstyle{definition}
\newtheorem{defin}[stmcounter]{Definition}

\theoremstyle{remark}

\newtheorem{rem}[stmcounter]{Remark}
\newtheorem{con}[stmcounter]{Construction}

\begin{document}

\title[Étale spaces and generalized inflations]{Homotopy types of finite étale spaces and generalized inflations}

\author{Anton Ayzenberg}
\address{Noeon Research}
\email{ayzenberga@gmail.com}

\author{Nadya Khoroshavkina}
\thanks{N.K. was supported by the Russian Science Foundation (grant No. 24-11-00366).}
\address{%
\parbox[t]{\linewidth}{\raggedright
National Research University \glqq Higher School of Economics\grqq,\\
International Laboratory of Cluster Geometry\vspace{1ex}
}}

\email{vkhoroshavkina@gmail.com}

%
%
\subjclass[2020]{Primary: 06A06, 55U10, 54B40, 55P10 Secondary: 18F70, 06A07, 55U05, 18A30, 55R55, 14F45, 18A10, 55P15, 05E45}

\keywords{sheaf on finite topology, flabby sheaf, étale space, poset fiber theorem, simplicial poset, inflation of simplicial complex, homotopy wedge decomposition, homotopy colimit}

\begin{abstract}
Inflation of a simplicial complex $K$ is a construction well known in combinatorial topology. It replaces each vertex $i$ of $K$ with a finite number $n_i$ of its copies, and each simplex $\{i_0,\ldots,i_k\}$ with $n_{i_0}n_{i_1}\cdots n_{i_k}$ many copies so that the collection of vertex-copies is spanned by a simplex in the inflation if and only if their originals were spanned by a simplex in the original complex. The celebrated poset fiber theorem of Bj\"{o}rner, Wachs, and Welker describes the homotopy type of such inflation in terms of homotopy types of $K$ and its links. In the current paper, we introduce more general inflations over simplicial posets: we replace each simplex with an arbitrary finite set of copies. The way how these sets patch together is specified by a commutative diagram, or, equivalently, a sheaf on the corresponding finite topology. The generalized inflation can be understood as étale space of such sheaf. We prove that, whenever this inflation sheaf is flabby, the poset fiber theorem still applies. We prove all results similar to those known for vertex inflations. We also cover the previous result of the first author about homotopy types of clique complexes of multigraphs.
\end{abstract}

\maketitle

\section{Introduction}\label{secMotivation}

Consider a simplicial map of two finite simplicial complexes $f\colon N\to K$. Assume $f$ is nondegenerate, meaning that it preserves dimensions of simplices, and surjective on simplices. How is the homotopy type of the base space $K$ connected to that of $N$?

This question is too vague hence meaningless. If the vertices of an $(n-1)$-dimensional simplicial complex $N$ admit a proper coloring in $n$ colors, such coloring provides a nondegenerate simplicial map $c\colon N\to \Delta^{n-1}$. First barycentric subdivision of any complex admits such a coloring, henceforth, for a surjective nondgenerate map $f\colon N\to K$ the covering space $N$ can have arbitrarily complex homotopy type, while the base space $K$ is contractible. It is not that difficult to construct examples the other way round: when $N$ is contractible while the base space $K$ is not.

Nondegenerate maps of simplicial complexes (or simplicial posets) are ubiquitous in mathematics. Branched coverings (in particular branched coverings in the sense of Dold--Smith~\cite{BuchVesnin}) provide important examples of nondegenerate simplicial maps. In~\cite{GugninFr1,GugninFr2,GugninBound}, Gugnin studied relation between cohomology rings of the base space and its branched covering space. Construction of an inflated complex introduced by Wachs~\cite{Wachs} provides a relatively simple example when homotopy type of the covering space is explicitly described using homotopy type of the base space and links of all its simplices. This technique was generalized in~\cite{AyzRukh} to study clique complexes of multigraphs and, in particular, complexes of tournaments of directed graphs introduced in topological data analysis to analyze brain functional networks~\cite{Govc, GLS}. Every clique complex of a multigraph can be mapped, by a nondegenerate simplicial map, to a clique complex of the corresponding simple graph. Homotopy wedge decomposition proved in~\cite{AyzRukh} may speed up homology (and persistent homology) computations when the data comes in the form of multigraphs.

In~\cite{AbrBrand,AbrParadox} Abramsky et.al. proposed to use nondegenerate maps of simplicial complexes as a tool to estimate contextuality and non-locality of quantum systems. In their approach, the simplices of the base complex $K$ indicate subsets of features that can be simultaneously measured in a quantum system, while the simplices of the covering complex $N$ indicate the potential results of such measurements. Such model allows to distinguish classical and quantum systems, as the inability to coherently measure all characteristics at once in a quantum system may be seen in topological structures of $K$, $N$ and their interaction.

In the current paper, we develop the approach to the study of nondegenerate simplicial maps based on sheaf theory. The approach is straightforward, however, it allows to prove, in a universal manner, a number of known results from combinatorial and homotopy topology. Instead of considering the simplicial map $f\colon N\to K$, we consider the $\Sets$-valued diagram $D_f$ on the base complex $K$ defined as follows.

\begin{con}\label{conDiagramFromMap}
For each (nonempty) simplex $I\in K$, the value $D_f(I)$ is the finite set $f^{-1}(I)$, the set of all simplices lying in the preimage of $I$ under $f$. These sets are naturally related with each other: if $I\subset J$, then we have a map
\[
\glue_{J,I}\colon D_f(J)\to D_f(I)
\]
sending each preimage of the bigger simplex $J$ to its unique face that is being mapped to the subsimplex $I$ by the map $f$. See Fig.~\ref{figExampleDiagram} for explanation.

Obviously, all the maps $\glue_{J,I}$ together form a commutative diagram, which is a functor $D_f\colon \cat(K\setminus\{\varnothing\})^{\op}\to\Sets$. Since every poset corresponds to an Alexandrov topology, and every diagram on a poset corresponds to a sheaf on this topology (as explained in subsections~\ref{AT} and~\ref{SAT}), we get a certain sheaf $\ca{D}_f$ on the Alexandrov topological space $X_{(K\setminus\{\varnothing\})^*}$ corresponding to the poset of nonempty simplices of $K$ with reversed order. The original covering complex $N$ can be recovered as the étale space of the sheaf $\ca{D}_f$.
\end{con}

\begin{figure}
  \centering
  \includegraphics[width=0.7\textwidth]{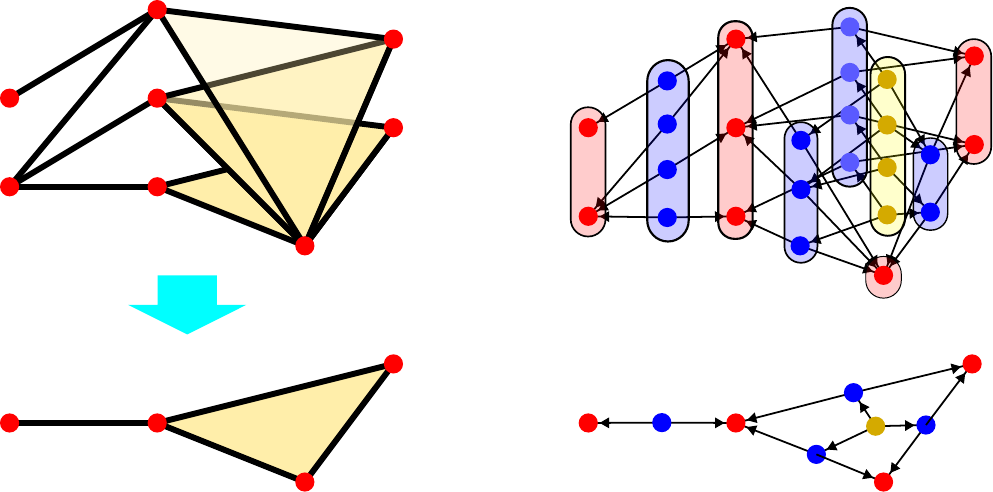}
  \caption{From simplicial map to $\Sets$-valued diagram over poset}\label{figExampleDiagram}
\end{figure}

The main conceptual contribution of our paper is the following claim: the properties of the sheaf $\ca{D}_f$ are useful in the study of homotopy types of covering spaces. We prove the following concrete result.

\begin{thm}\label{thmMainIntro}
Assume that $f\colon N\to K$ is a surjective nondegenerate map of simplicial complexes (or simplicial posets). Assume moreover, that the corresponding sheaf $\ca{D}_f$ is flabby.
\begin{enumerate}
  \item If $K=\Delta^{n-1}$ is an $(n-1)$-dimensional simplex, then $N$ is homotopy equivalent to a wedge of $(n-1)$-dimensional spheres.
  \item For general $K$, there is a homotopy wedge decomposition that expresses homotopy type of $N$ through homotopy types of $K$ and all its links (see formula~\eqref{eqMainWedgeDecomposition})
  \item If $K$ is homotopically Cohen--Macaulay complex of dimension $n-1$, then so is $N$.
\end{enumerate}
\end{thm}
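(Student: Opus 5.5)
The plan is to treat $f\colon N\to K$ as a poset map between face posets, $F\colon P(N)\to P(K)$ with $P(K)=K\setminus\{\varnothing\}$, and to apply the poset fiber theorem of Bj\"orner--Wachs--Welker. That theorem says: if for every $I\in P(K)$ the fiber $F^{-1}(P(K)_{\le I})$ is $(\ell(I)-1)$-connected, where $\ell(I)=\dim I$, then
\[
N\simeq K\vee\bigvee_{I\in P(K)} F^{-1}(P(K)_{\le I})*\link_K I .
\]
This is the desired formula~\eqref{eqMainWedgeDecomposition}. Since $F$ is nondegenerate, the fiber $F^{-1}(P(K)_{\le I})$ is the subcomplex $N_I=f^{-1}(\Delta_I)$ of $N$ lying over the simplex $I$. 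Everything therefore reduces to item (1), applied to the restricted map $N_I\to\Delta_I$.

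The first step is to check that flabbiness is inherited by restrictions. Open sets of the Alexandrov space $X_{P(K)^*}$ are down-closed subsets of simplices, so they are subcomplexes. The restriction of $\ca{D}_f$ to the open subset $P(\Delta_I)$ is again flabby, because any open subset of $P(\Delta_I)$ is open in $X_{P(K)^*}$. Item (2) then follows from item (1) together with the fiber theorem. Surjectivity guarantees the fibers are nonempty, although for a flabby sheaf this is automatic at the top level.

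For item (1), with $K=\Delta^{n-1}$, I would argue by induction on $n$ and show that $N$ is $(n-2)$-connected and has dimension $n-1$; this gives a wedge of $(n-1)$-spheres. I would write $N$ as a union of the closed top simplices $\sigma\in D_f([n])$. Flabbiness means that the restriction $\Gamma(X,\ca{D}_f)\to\Gamma(U,\ca{D}_f)$ is surjective for every open $U$, that is, for every subcomplex $L\subseteq\Delta^{n-1}$. Concretely, every lift of $L$ to $N$ (a compatible choice of preimages) extends to a top simplex.

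Take $L=\partial\Delta^{n-1}$. The boundary preimage $N_\partial=f^{-1}(\partial\Delta)$ is a flabby inflation of the boundary, so the inductive step should say that it is a wedge of $(n-2)$-spheres. Attaching the top simplices cones off certain $(n-2)$-spheres, namely the global sections over $\partial\Delta$ that are hit. Flabbiness says that every global section over the boundary, which is a combinatorial sphere mapped isomorphically, gets filled. So I want to show that these section-spheres generate $\pi_{n-2}$ of $N_\partial$, or at least generate $H_{n-2}$ with $N_\partial$ simply connected. Once that holds, $N$ is $(n-2)$-connected.

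For this reason the induction should be strengthened to the following statement: for a flabby inflation $N$ of a homotopically Cohen--Macaulay base (in particular of $\partial\Delta^{n-1}$, which is a sphere), the subcomplexes that are images of global sections span the top homotopy. An alternative route avoids this. Filter $\Delta^{n-1}$ by adding vertices one at a time, $\Delta^{k}\subset\Delta^{k+1}$. Flabbiness makes $f^{-1}(\Delta^{k+1})$ a union over $x\in D_f(\{k+1\})$ of cones whose bases are flabby inflations over $\Delta^k$. Each such base is a wedge of $k$-spheres by induction, and gluing cones over highly connected pieces along a highly connected subcomplex preserves connectivity. I will try this cone decomposition first, since it seems cleanest. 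The main obstacle is verifying that the pairwise overlaps of these cones, and the subcomplex over $\Delta^k$ itself, are $(k-1)$-connected. This needs flabbiness of the restricted sheaf on the star of the vertex $k+1$.

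Item (3) then follows from (2) applied to links. The link in $N$ of a simplex $\tilde I$ over $I$ maps nondegenerately onto $\link_K I$, and its diagram is the sheaf restricted to the star, which remains flabby. So each link of $N$ has a wedge decomposition into joins of $(\dim-1)$-connected spaces, using that $\link_K I$ and its links are Cohen--Macaulay and that the fibers are wedges of spheres of the right dimension by (1). Each link is therefore appropriately connected.
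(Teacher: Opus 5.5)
Your reduction of items (2) and (3) to item (1) follows the paper: the poset fiber theorem for the projection, whose fibers are flabby inflations of simplices because $(P_{\leqslant I})^*$ is open. Item (1), however, is the actual content, and it is not proved. Both of your routes stop where flabbiness has to do real work, and for the vertex filtration you name the wrong obstacle.

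If $N$ is a simplicial complex, your filtration works, but trivially. The map $D(J)=\ca{D}(U_J)\to\ca{D}(\mathrm{sk}_0U_J)=\prod_{j\in J}D(\{j\})$ is injective because simplices of $N$ are determined by their vertices, and surjective by flabbiness. Hence $f^{-1}(\Delta^{k+1})=f^{-1}(\Delta^k)\ast D(\{k+1\})$, and you are back to Wachs's vertex inflations.

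For simplicial posets (multiclique complexes, which is what the theorem is really about), the closed star of $x\in D(\{k+1\})$ is not a cone on a subcomplex. The opposite-face map $\varphi_x\colon N_{>x}\to f^{-1}(\Delta^k)$ is onto but not injective. Already for two vertices joined by two edges over $\Delta^1$, a flabby inhabited example, the star of the vertex over $1$ is the whole circle. So $|f^{-1}(\Delta^{k+1})|$ is $|f^{-1}(\Delta^k)|$ with cones $C|N_{>x}|$ attached along the maps $\varphi_x$. The exact sequence of the pair gives, for $k\geqslant 1$,
\[
H_k|f^{-1}(\Delta^{k+1})|\cong\operatorname{coker}\bigl(\bigoplus\nolimits_x H_k|N_{>x}|\to H_k|f^{-1}(\Delta^k)|\bigr).
\]
Knowing that the bases, the overlaps and $f^{-1}(\Delta^k)$ are $(k-1)$-connected yields only $(k-1)$-connectivity of the union. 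The $k$-connectivity you need is equivalent to surjectivity of this map for $k\geqslant 2$, and to the corresponding normal-generation statement for $\pi_1$ when $k=1$. That is the same unproved ``generation'' claim as in your boundary route, where the global sections over $\partial\Delta^{n-1}$ must span $\pi_{n-2}$. You give no argument that flabbiness implies either version.

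The paper never needs a generation statement. It runs an inner induction on the complexity $c(D)=\#P_D$:
\begin{enumerate}
\item Pick an inclusion-minimal $I_0$ with $\#D(I_0)\geqslant 2$ and split $D(I_0)=A_1\sqcup A_2$. Let $D_\varepsilon$ keep the elements restricting into $A_\varepsilon$, together with all stalks over $J\ngeqslant I_0$.
\item By minimality, every stalk strictly below $I_0$ is a singleton. So the Gluing Lemma plus flabbiness of $\ca{D}$ shows that $\ca{D}_1,\ca{D}_2$ are again inhabited and flabby.
\item One has $P_D=P_{D_1}\cup P_{D_2}$. The intersection is the inflation of the open subcomplex $K(I_0)=\{J\ngeqslant I_0\}=\partial U_{I_0}\ast U_{[n]\setminus I_0}$ along the restricted, hence flabby, sheaf.
\item $K(I_0)$ is Cohen--Macaulay of dimension $n-2$, so the outer induction ($\Th_3(n-1)$) makes the intersection a wedge of $(n-2)$-spheres. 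The two pieces are wedges of $(n-1)$-spheres by the inner induction.
\end{enumerate}
The pieces already have the right connectivity and the intersection is one dimension lower, so the gluing lemma for wedges of spheres finishes the step with no classes to kill. This is why the three statements must be proved in one interleaved induction, which your outline only gestures at.

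Separately, in item (3) you claim the link diagram $\{\sigma\in D(J):\sigma|_I=\tilde I\}$ on $K_{>I}$ is a restriction of $\ca{D}$ and hence flabby. For simplicial posets this needs proof, since $K_{>I}$ is closed, not open, in $X_{K^*}$; the paper also only sketches this part.
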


This theorem generalizes the result of Wachs~\cite{Wachs} concerning vertex-inflated complexes, and all the generalizations concerning simplex-inflations obtained previously in~\cite{AyzRukh}. Recalling that classical measurement systems are ontologized in~\cite{AbrBrand} precisely by flabby sheaves, we get the following interesting corollary: classical systems, on the topological level, are expected to behave according to Theorem~\ref{thmMainIntro}, while any discrepancy (e.g. preimage of a $k$-dimensional simplex has nontrivial simplicial homology in degrees $<k$) is the evidence towards quantum behavior of the system. Notice that in the approach of the current work we never mention any homology or cohomology: all results are formulated and proved in non-Abelian setting.

One of our main approaches includes the notion of generalized inflation, which we would call inflation for simplicity. An arbitrary poset $S$ can be inflated along an arbitrary diagram $D$ on the dual poset $S^*$. Roughly, $S$ corresponds to the base space $K$ and its inflation corresponds to the covering space $N$.

The paper is organized as follows. In the remaining sections, until the last one, we forget about nondegenerate simplicial maps and work entirely with $\Sets$-valued diagrams over posets, sheaves over finite topologies, and their étale spaces. Section~\ref{secOverviewSheaves} provides an overview of basic constructions related to posets, finite topological spaces, sheaves, and diagrams. Section~\ref{secOverviewCombTop} provides an overview of basic constructions related to combinatorial and homotopy topology: definitions of simplicial posets, homotopically Cohen--Macaulay posets. It also describes the relation between étale spaces of sheaves and homotopy colimits of diagrams. In Section~\ref{secInflation} we introduce the notion of generalized inflation along a diagram and list its straightforward properties. We provide two motivating examples: vertex-inflations introduced by Wachs and clique complexes of multigraphs. In Section~\ref{secFormulations} we formulate three main technical theorems. In Section~\ref{secProofs} we prove all these theorems simultaneously by induction. In the final Section~\ref{secFinalRemarks} we derive Theorem~\ref{thmMainIntro} and explain how the results of papers~\cite{Wachs} and~\cite{AyzRukh} are obtained as particular cases of our constructions and claims.

\section{Overview: sheaves and finite topologies}\label{secOverviewSheaves}

Detailed exposition of majority of constructions related to sheaves over finite topologies can be found in the mathematical appendix of the preprint~\cite{AyzSheaves}.

\subsection{Preorders and Notation}

Let $S$ be a partially ordered set (poset). For each such set $S$ we can define a small category $\cat(S)$, whose objects are elements from $S$,  and  there is only one morphism from $s_1$ to $s_2$ in case $s_1\leqslant s_2$, and no morphisms otherwise.

A map between posets $f\colon S\to T$ is called monotonic (or morphism of posets) if the inequality $s_1\leqslant s_2$ in $S$ implies the inequality $f(s_1)\leqslant f(s_2)$ in $T$. Each such morphism induces a functor $\cat(f)\colon \cat(S)\to \cat(T)$ between the corresponding small categories. Similar definitions take place for partial preorders.

For any poset $S$  and  any element $s\in S$ we define $S_{\geqslant s}=\{t\in S\mid t\geqslant s\}$, and $S_{>s}=\{t\in S\mid t>s\}$. Posets $S_{\leqslant s}$, $S_{<s}$ are defined in a similar fashion.

Poset $S$ with the reversed order is denoted by $S^*$.

A map between posets $f\colon S\to T$ is called \emph{an embedding} if it is injective. A map $f$ is called an \emph{exact embedding} if, additionally to the injectivity, the inequality $f(s_1)\leqslant_T f(s_2)$ implies $s_1\leqslant_Ss_2$ (i.e. $S$ is a subset of $T$, and the order is inherited from $T$). Exact embeddings are denoted by $f\colon S\hookrightarrow T$.

\subsection{Alexandrov Topologies}\label{AT}

Let $X$ be a set, and $\Omega$ be a topology on $X$, that is a collection of subsets of $X$ called open subsets. Recall that a topological space $(X,\Omega)$ is called Alexandrov topological space if it satisfies the additional property that the intersection of any (possibly infinite) number of open sets is open. A topology $\Omega$ on $X$ is said to satisfy the separation axiom $T_0$ (Kolmogorov axiom) if, for each pair of points $x_1\neq x_2\in X$, either there is an open neighborhood $U$ of $x_1$, which does not contain $x_2$, or an open neighborhood $U$ of $x_2$, which does not contain $x_1$.

The following correspondence (cryptomorphism) between posets and topological spaces is well known, see, for example,~\cite{Arenas}.

\begin{prop}\label{propPosTop}
The category $\PrePos$ of partially preordered sets and morphisms is naturally equivalent to the category $\AlexTop$ of Alexandrov topological spaces and continuous functions. Under this equivalence, posets correspond to Alexandrov topologies satisfying the $T_0$-separation axiom.
\end{prop}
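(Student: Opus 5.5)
We construct explicit functors in both directions and check that they are mutually inverse on the nose, not merely up to isomorphism. For the direction $\PrePos\to\AlexTop$, let $(X,\leqslant)$ be a preordered set and declare $U\subseteq X$ open when it is an up-set: $x\in U$ and $x\leqslant y$ imply $y\in U$. Arbitrary unions and arbitrary intersections of up-sets are again up-sets, and $\varnothing$ and $X$ are up-sets. So this is an Alexandrov topology $\Omega_\leqslant$. For the direction $\AlexTop\to\PrePos$, let $(X,\Omega)$ be Alexandrov. Each point $x$ has a smallest open neighbourhood
\[
U_x=\bigcap_{x\in U\in\Omega}U,
\]
which is open by the Alexandrov property. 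Define the specialization preorder by $x\leqslant_\Omega y$ if and only if $y\in U_x$ (equivalently $U_y\subseteq U_x$). Reflexivity and transitivity are immediate from this description.

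Next we check that the two assignments are mutually inverse. Starting from a preorder, the smallest up-set containing $x$ is $X_{\geqslant x}$, so the specialization preorder of $\Omega_\leqslant$ is the original $\leqslant$. Starting from an Alexandrov topology, every open set is an up-set for $\leqslant_\Omega$, since $x\in U$ gives $U_x\subseteq U$. Conversely, every up-set $V$ satisfies $V=\bigcup_{x\in V}U_x$, and this union is open. Hence $\Omega_{\leqslant_\Omega}=\Omega$.

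On morphisms we show that a map $f\colon X\to Y$ is monotone exactly when it is continuous, so both functors are the identity on underlying maps. Suppose $f$ is monotone and $V\subseteq Y$ is an up-set. If $x\in f^{-1}(V)$ and $x\leqslant x'$, then $f(x)\leqslant f(x')$, so $f(x')\in V$; thus $f^{-1}(V)$ is an up-set and $f$ is continuous. Suppose instead $f$ is continuous and $x\leqslant x'$. Then $f^{-1}(U_{f(x)})$ is an open set containing $x$, so it contains $U_x\ni x'$. Hence $f(x')\in U_{f(x)}$, that is, $f(x)\leqslant f(x')$. Functoriality is automatic because composition is composition of maps. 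We therefore get an isomorphism of categories, and in particular an equivalence.

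It remains to treat the $T_0$ clause. We have $x\leqslant y$ and $y\leqslant x$ if and only if $U_x=U_y$, which means every open set containing one of the two points contains the other. Thus antisymmetry of $\leqslant_\Omega$ is precisely the $T_0$ axiom, and partial orders correspond to $T_0$ Alexandrov spaces. None of these steps is a genuine obstacle. The only point needing care is that the existence of the minimal open sets $U_x$ uses the Alexandrov hypothesis on infinite intersections; without it the specialization preorder would not recover the topology.
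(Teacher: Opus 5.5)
Your proof is correct and matches the paper's approach. The paper does not prove the proposition itself; it cites it as well known (e.g.\ from Arenas) and only recalls the two constructions you use: the topology of upper order ideals, and the preorder $x_1\leqslant x_2$ iff $U_{x_1}\supseteq U_{x_2}$ defined via minimal open neighbourhoods, with $U_s=S_{\geqslant s}$. Your checks (the two assignments are mutually inverse, monotone $\Leftrightarrow$ continuous, antisymmetry $\Leftrightarrow$ $T_0$) correctly supply the details the paper leaves to the literature, and in fact yield an isomorphism of categories, not just an equivalence.
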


The following exposition requires a number of notions used to construct this correspondence. Let us briefly recall them.

\begin{con}\label{conPrePosToSpace}
Let $S$ be a set with a preorder $\leqslant$. Consider the topology $\Omega_S$ on $S$ that consists of all upper order ideals. In other words, $U\in \Omega_S$ if and only if conditions $x\in U$ and $y\geqslant x$ imply $y\in U$. Then $X_S=(S,\Omega_S)$ is an Alexandrov topological space. If $S$ is a poset, then $X_S$ satisfies $T_0$-separation axiom.
\end{con}

\begin{con}\label{conSpaceToPrepos}
Let $(X,\Omega)$ be an Alexandrov topological space. Consider a point $x\in X$ and the family $\mathcal{U}=\{U\in \Omega\mid x\in U\}$ of all open neighbourhoods of $x$. Their total intersection
\begin{equation}\label{eqDefUx}
U_x=\bigcap\nolimits_{U\in \mathcal{U}}U
\end{equation}
is an open set, according to the definition of Alexandrov topology. The set $U_x$ is called \emph{the minimal open neighborhood} of $x$. We define preorder on $X$ by setting $x_1\leqslant x_2$ if and only if $U_{x_1}\supseteq U_{x_2}$.
\end{con}

\begin{rem}\label{remMinNbhdIsACone}
In terms of the (pre)order on $S$, the minimal open neighbourhood $U_s$ of $s\in X_S$ in the topology $\Omega_S$ is just the upper cone over $s$, that is $U_s=S_{\geqslant s}$.
\end{rem}

\begin{rem}\label{remExactInclusion}
A morphism $f\colon S\to T$ between posets is an exact embedding if and only if the map $f\colon X_S\to X_T$ is an embedding and the topology on $X_S$ is induced from the topology on $X_T$. In such cases we denote the map by $f\colon X_S\hookrightarrow X_T$ similarly to posets.
\end{rem}

\subsection{Diagrams on partially ordered sets}

Let us fix a category $\Vv$. In homological algebra, a category $\Vv$ is usually the category of abelian groups  and  group homomorphisms, the category of rings  and  ring homomorphisms, or the category of vector spaces and linear maps. In our paper we basically restrict attention to the category $\Vv=\FinSets$ of finite sets and maps between them. Some basic definitions are given in greater generality for the completeness of exposition.

\begin{defin}
A $\Vv$-valued \emph{diagram on a poset} $S$ is a functor $D\colon \cat(S)\to \Vv$.
\end{defin}

All diagrams on $S$ form the category $\Diag(S,\Vv)$, whose morphisms are natural transformations of functors.

\subsection{Sheaves on topologies}

\begin{defin}\label{definPresheaf}
A \emph{presheaf} on a topological space $(X,\Omega_X)$ valued in a category $\Vv$ is a contravariant functor $\ca{F}\colon \cat(\Omega_X\setminus\{\varnothing\})^{op}\to \Vv$.
\end{defin}

Here the set $\Omega_X$ of open subsets is partially ordered by inclusion, $\cat(\Omega_X\setminus\{\varnothing\})^{op}$ is a category opposite to the category of this poset, that is the one obtained by reversing all arrows. In other words, presheaf can be considered as a rule that, to each non-empty open set $U\in\Omega_X$, assigns an object $\ca{F}(U)$ of $\Vv$, and, to each pair $V\subseteq U$, assigns a morphism
\[
r_{U\supseteq V}=\ca{F}(U\supseteq V)\colon \ca{F}(U)\to \ca{F}(V),
\]
called \emph{the restriction morphism}. All restriction morphisms satisfy consistency\footnote{Here and below we use the notation for the composition borrowed from programming and applied category theory: $f;g=g\circ f$.}: $r_{W\supseteq U};r_{U\supseteq V}=r_{W\supseteq V}$.

\begin{rem}
When $\Vv$ is the category $\Sets$ (or any concrete category, that is the one that behaves like ``sets with some structure''), an object $U\in V$ has elements.
In this case, the elements $u\in\ca{F}(U)$ are called \emph{sections of the sheaf} $\ca{F}$ on a subset $U$. If $U\supseteq V$ and $u\in\ca{F}(U)$, then $\ca{F}(U\supseteq V)(u)\in \ca{F}(V)$ is called \emph{the restriction of the section} $u$ to the subset $V$ and is denoted by $u|_V$.
\end{rem}

\begin{defin}\label{definInhabited}
A presheaf $\ca{F}$ with values in the category $\FinSets$ is called \emph{inhabited} if $\ca{F}(U)\neq\varnothing$ for any $U\in \Omega_X\setminus\{\varnothing\}$.
\end{defin}

\begin{defin}\label{definSheaf}
A presheaf $\ca{F}$ is called a \emph{sheaf} if the following two properties are satisfied:
\begin{enumerate}
  \item (Locality) Let $U$ be an open set, which is covered by a family of open subsets, $\{U_{i}\}_{i\in I}$, $U_{i}\subseteq U$ for each $i\in I$. Assume we are given two elements $u,v\in F(U)$ such that $u|_{U_{i}}=v|_{U_{i}}$ for each $i\in I$. Then the equality $u=v$ holds.
  \item (Gluing) Let $U$ be an open set  and  $\{U_{i}\}_{i\in I}$ be its open covering by subsets $U_{i}\subseteq U$ and let $\{u_{i}\in F(U_{i})\}_{i\in I}$ be a collection of sections. Suppose the sections agree on the overlap of their domains, that is $u_{i}|_{U_{i}\cap U_{j}}=u_{j}|_{U_{i}\cap U_{j}}$ for each $i,j\in I$. Then there exists a section $u\in F(U)$ such that $u|_{U_{i}}=u_{i}$ for each $i\in I$.
\end{enumerate}
\end{defin}

All presheaves on a topology $X$ valued in $\Vv$ form the category $\PreShvs(X,\Vv)$. The morphisms are natural transformations of functors from $\cat(\Omega_X\setminus\{\varnothing\})^{op}$ to $\Vv$. Sheaves form a complete subcategory $\Shvs(X,\Vv)$ in $\PreShvs(X,\Vv)$.

\begin{defin}\label{defStalk}
If $\ca{F}$ is a presheaf on a topological space $(X,\Omega_X)$ and $x\in X$, then
\[
\ca{F}_x=\lim\limits_{\substack{\longrightarrow\\x\in U, U\in\Omega_X}}\ca{F}(U)\hspace{6pt}\in \Ob(\Vv)
\]
(the direct limit of values of the presheaf over all open subsets which contain $x$), is called \emph{the stalk} of the presheaf $\ca{F}$ in $x$.
\end{defin}

This definition makes sense only if the category $\Vv$ is closed (that is contains all direct limits), or, if we speak about finite topologies, contains all finite limits. The category $\FinSets$ contains all finite limits, see~\cite{Maklein}. From the definition of a limit, it follows that for each $U\in \Omega_X$, $x\in U$, there exists a canonical morphism $\ca{F}(U\ni x)\colon \ca{F}(U)\to \ca{F}_x$.

\subsection{Sheaves on Alexandrov topologies}\label{SAT}

According to Proposition~\ref{propPosTop}, each poset $S$ is in a correspondence with a $T_0$-Alexandrov topology $X_S=(S,\Omega_S)$. There is a natural one-to-one correspondence between $\Vv$-valued diagrams on $S$ and $\Vv$-valued sheaves on $X_S$.

\begin{prop}\label{propDiagSheaf}
The category $\Diag(S,\Vv)$ of diagrams on a poset $S$ is equivalent to the category $\Shvs(X_S,\Vv)$ of sheaves on the corresponding Alexandrov topology.
\end{prop}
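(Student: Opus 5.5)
The plan is to write down explicit functors in both directions and check that they are mutually inverse up to natural isomorphism. The only structural input is Remark~\ref{remMinNbhdIsACone}: each point $s$ has a smallest open neighbourhood $U_s=S_{\geqslant s}$, and $s\leqslant t$ if and only if $U_s\supseteq U_t$. Hence the assignment $s\mapsto U_s$ is an order embedding of $S$ into the poset of nonempty open sets ordered by inclusion. Since a presheaf is contravariant in $\Omega_S$, restricting along this embedding turns a sheaf into a covariant functor on $\cat(S)$.

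\textbf{From sheaves to diagrams.} Given a sheaf $\ca{F}$, I set $D_{\ca{F}}(s)=\ca{F}(U_s)$. For $s\leqslant t$ the inclusion $U_t\subseteq U_s$ gives the morphism $r_{U_s\supseteq U_t}$. Functoriality follows from the consistency of restrictions, and a morphism of sheaves restricts to a natural transformation. I will also note that $D_{\ca{F}}(s)=\ca{F}(U_s)\cong\ca{F}_s$: the neighbourhood $U_s$ is terminal in the directed system of neighbourhoods of $s$, so the colimit in Definition~\ref{defStalk} is attained there.

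\textbf{From diagrams to sheaves.} Given a diagram $D$, I define for every nonempty open set $U$
\[
\ca{F}_D(U)=\lim_{s\in U} D(s),
\]
the limit of $D$ restricted to the subposet $U\subseteq S$. For $\Vv=\Sets$ this is the set of compatible families $(x_s)_{s\in U}$ with $D(s\leqslant t)(x_s)=x_t$. For inclusions $V\subseteq U$, the restriction maps are the canonical maps of limits induced by $V\subseteq U$. Because $U_s$ has the least element $s$, we get $\ca{F}_D(U_s)\cong D(s)$; this supplies one of the two natural isomorphisms.

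\textbf{Sheaf axioms for $\ca{F}_D$.} Take an open cover $\{U_i\}$ of $U$. Locality holds because a compatible family on $U$ is determined by its components, and every $s\in U$ lies in some $U_i$. For gluing, take families $u_i$ on $U_i$ that agree on the overlaps $U_i\cap U_j$. At each $s\in U$ I define $x_s$ as the $s$-component of any $u_i$ with $s\in U_i$; this is well defined because the families agree on overlaps. Compatibility for $s\leqslant t$ comes from upward closedness: if $s\in U_i$ then $t\in U_i$ as well, so the relation $D(s\leqslant t)(x_s)=x_t$ already holds inside $u_i$. For general $\Vv$ I would state the same argument as the equaliser condition, which expresses the limit over $U$ through the limits over the $U_i$. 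It holds because $U$ is the union of the upward-closed sets $U_i$ and every relation $s\leqslant t$ lies inside a single $U_i$. This requires $\Vv$ to have the relevant finite limits, which $\FinSets$ does.

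\textbf{The other natural isomorphism.} Finally I check $\ca{F}\cong\ca{F}_{D_{\ca{F}}}$ for an arbitrary sheaf $\ca{F}$. The open set $U$ is covered by the sets $U_s$, $s\in U$. The overlaps $U_s\cap U_t$ are themselves unions of minimal neighbourhoods $U_r$ for $r$ in the overlap. The sheaf axioms therefore identify $\ca{F}(U)$ with compatible families in $\lim_{s\in U}\ca{F}(U_s)$. I expect this step to be the main obstacle: compatibility over pairwise intersections must be reduced to compatibility along the order relations $s\leqslant t$. This works because a section on $U_s\cap U_t$ is determined by its restrictions to the $U_r$ contained in the overlap, and for each such $r$ both $s\leqslant r$ and $t\leqslant r$ hold. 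Naturality of both isomorphisms in $\ca{F}$ and in $D$ is then routine.
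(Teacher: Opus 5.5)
Your proposal is correct and follows essentially the standard folklore argument the paper relies on; the paper only cites \cite{AyzSheaves} for the proof, but recovers the diagram as $s\mapsto\ca{F}(U_s)\cong\ca{F}_s$ and the sheaf as the inverse limit $\ca{D}(U)=\lim_{s\in U}D(s)$ exactly as in Remark~\ref{remConcreteLimSets}, and your use of locality to reduce compatibility on $U_s\cap U_t$ to the relations $s\leqslant r$, $t\leqslant r$ is the right key step. One cosmetic slip: $s\mapsto U_s$ is order-\emph{reversing} for inclusion (it embeds $S$ into the nonempty open sets ordered by reverse inclusion), and this is precisely why the contravariant $\ca{F}$ yields a covariant diagram.
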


The claim is folklore, but the moderately detailed proof can be found in~\cite{AyzSheaves}. We adopt the following convention: the diagrams on a poset $S$ are denoted by straight letters and the corresponding sheaves on $X_S$ are denoted with the same letters written calligraphically. For example, a diagram $D$ corresponds to a sheaf $\ca{D}$. Proposition~\ref{propDiagSheaf} ensures that both $D$ and $\ca{D}$ carry the same information. However in some proofs it is important to distinguish them to maintain type consistency.

\begin{rem} \label{remConcreteLimSets}
The value of a sheaf on an open subset can be reconstructed from the stalks by the following formula

\begin{equation}\label{eqInvLimit}
\ca{D}(U)=\lim\limits_{\substack{\leftarrow \\ s\in U}}D(s)=\left\{(v_s\mid s\in U)\in\prod_{s\in U}D(s)\mid D(s_1\leqslant s_2)v_{s_1}=v_{s_2} \text{ for any }s_1\leqslant s_2 \right\}.
\end{equation}
In other words, a section $x\in \ca{D}(U)$ on an open set $U$ coincides with a collection of elements $D(s)$, for $s\in U$, compatible with respect to the maps of the diagram. In the presentation~\eqref{eqInvLimit} the restriction map $\ca{D}(U\supset V)$ is defined in a straightforward manner, because the family of compatible stalks on the set $U$ is also compatible on its subset $V$.
\end{rem}

\subsection{Trivial sheaf}

The following proposition is considered obvious in every book on algebraic geometry. It is indeed quite simple, but, at the same time, its explicit formulation comes in handy in further combinatorial applications.

\begin{prop}\label{propTrivialSheaf}
Let $\ca{D}$ be a $\Sets$-valued sheaf on the Alexandrov topology $X_S=(S,\Omega_S)$, which corresponds to the diagram $D$ on a poset $S$. The following two conditions are equivalent:
\begin{enumerate}
  \item All stalks $D(s)=\ca{D}(U_s)$, $s\in S$, are singletons.
  \item All values $\ca{D}(U)$, $U\in \Omega_S$, are singletons.
\end{enumerate}
\end{prop}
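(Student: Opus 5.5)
The plan is to prove both implications directly from the explicit description of sections in Remark~\ref{remConcreteLimSets}, formula~\eqref{eqInvLimit}. For the implication (2)$\Rightarrow$(1) nothing needs to be done beyond noting that, by Remark~\ref{remMinNbhdIsACone}, each minimal neighbourhood $U_s=S_{\geqslant s}$ is itself an open set. Moreover, $\ca{D}(U_s)=D(s)$: the element $s$ is the least element of $U_s$, so a compatible family $(v_t\mid t\in U_s)$ is uniquely determined by its component $v_s$ via $v_t=D(s\leqslant t)v_s$. Hence the hypothesis that all $\ca{D}(U)$ are singletons specializes to the stalks.

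For (1)$\Rightarrow$(2), fix a nonempty open set $U\in\Omega_S$ and show that the inverse limit in~\eqref{eqInvLimit} has exactly one element. Uniqueness is immediate, because the product $\prod_{s\in U}D(s)$ of singletons is a singleton, so there is at most one compatible family. For existence, take the unique element $v_s\in D(s)$ for each $s\in U$. For any $s_1\leqslant s_2$ in $U$, the element $D(s_1\leqslant s_2)v_{s_1}$ lies in the singleton $D(s_2)$, so it must equal $v_{s_2}$. Thus the family is compatible and defines a section, so $\ca{D}(U)$ is a singleton.

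The only point requiring care, and the closest thing to an obstacle, is the bookkeeping of variance. One must confirm that~\eqref{eqInvLimit} is written with the correct direction of maps for a diagram on $S$ corresponding to a sheaf on $X_S$, and that the identification of the stalk with $\ca{D}(U_s)$ is legitimate. The latter holds because, in an Alexandrov topology, $U_s$ is the smallest open set containing $s$, so the direct limit in Definition~\ref{defStalk} is attained at $U_s$. Once Proposition~\ref{propDiagSheaf} and Remark~\ref{remConcreteLimSets} are taken as given, both directions reduce to the elementary observation that any map into a singleton is forced. If the empty open set were allowed, its value would also be a singleton as the empty limit, but presheaves here are defined only on nonempty opens, so this case does not arise.
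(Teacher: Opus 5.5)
Your proof is correct and is essentially the paper's argument: $(2)\Rightarrow(1)$ by specializing to the open sets $U_s$ (with $\ca{D}(U_s)=D(s)$ because $s$ is the least element of $U_s$), and $(1)\Rightarrow(2)$ via the inverse-limit description~\eqref{eqInvLimit}, where a product of singletons is a singleton whose unique element is automatically compatible. Your labelling of the two directions is the right one; the paper's text calls the trivial specialization to $U_s$ the implication $1\Rightarrow 2$, which swaps the names.
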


\begin{proof}
The implication $1\Rightarrow 2$ is trivial, since the minimal open neighbourhoods $U_s$ are open sets as well. The reverse implication follows from the explicit construction of the value $\ca{D}(U)$ out of the corresponding stalks, see Remark~\ref{remConcreteLimSets}. Indeed, the product of any number of singletons is a singleton,  and  the unique element of the product of the stalks satisfies the compatibility relations from the equation~\eqref{eqInvLimit}.
\end{proof}

\begin{defin}\label{defTrivialSheaf}
A sheaf $\ca{D}$ (and  the corresponding diagram $D$) with values in the category of sets is called trivial if it satisfies any of the equivalent conditions of Proposition~\ref{propTrivialSheaf}.
\end{defin}

Of course, Proposition~\ref{propTrivialSheaf} is valid for the topological spaces in general, not only for the Alexandrov topological spaces,  and  for an (almost) arbitrary category as a target category, instead of the category of sets: in that case we require all values (the corresponding stalks) to be final objects of the target category. In the abelian setting, it resembles the well-known fact that the module of global sections of the zero sheaf is zero\footnote{This situation should not be confused with the constant 1-dimensional sheaf in which case the module of global sections counts the number of connected components.}. An interested reader may solve this exercise to practise the machinery of abstract nonsense, but in our paper we don't need this level of generality.

\begin{lem}[Gluing Lemma]\label{lemGlue}
Let $\ca{D}$ be a sheaf on a topological space $X_S$. Let $U_1,U_2$ be two open sets in $X_S$, such that for each point $x\in U_1\cap U_2$ the stalk $D(x)$ is a singleton. Then, for any two sections $v_1\in \ca{D}(U_1)$, $v_2\in \ca{D}(U_2)$, there exists a unique section $v\in \ca{D}(U_1\cup U_2)$ which restricts to $v_1$ on $U_1$ and to $v_2$ on $U_2$.
\end{lem}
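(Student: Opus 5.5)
The plan is to use the explicit description of sections from Remark~\ref{remConcreteLimSets}, rather than the abstract sheaf axioms. A section $v_1\in\ca{D}(U_1)$ is a compatible family $(v_{1,s}\mid s\in U_1)$ with $v_{1,s}\in D(s)$, and similarly $v_2=(v_{2,s}\mid s\in U_2)$. I would define the candidate section $v$ on $U_1\cup U_2$ pointwise:
\[
v_s=v_{1,s}\ \text{ if } s\in U_1,\qquad v_s=v_{2,s}\ \text{ if } s\in U_2 .
\]
This is well defined because for $s\in U_1\cap U_2$ the stalk $D(s)$ is a singleton, so $v_{1,s}=v_{2,s}$ automatically.

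Next I check that $(v_s)$ satisfies the compatibility condition of~\eqref{eqInvLimit}: for $s_1\leqslant s_2$ in $U_1\cup U_2$ we need $D(s_1\leqslant s_2)v_{s_1}=v_{s_2}$. Since $U_1\cup U_2$ is open (an upper set) and $U_1,U_2$ are upper sets, if $s_1\in U_i$ then $s_2\in U_i$ as well. Hence both points lie in the same $U_i$, and compatibility is inherited from $v_i$.

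This case analysis, using that the $U_i$ are upper ideals (Construction~\ref{conPrePosToSpace}), is the only point needing care. Note that it is the upward closure of the sets, not any property of the stalks, that makes the check work. The restrictions of $v$ to $U_1$ and $U_2$ are $v_1$ and $v_2$ by construction, since restriction in the presentation~\eqref{eqInvLimit} is just forgetting coordinates.

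For uniqueness, any $v'\in\ca{D}(U_1\cup U_2)$ with the required restrictions has coordinates $v'_s=v_{i,s}$ for $s\in U_i$. So $v'=v$ as elements of the product, which is exactly the locality axiom in this concrete model. Alternatively, uniqueness and existence both follow from the sheaf axioms of Definition~\ref{definSheaf} applied to the cover $\{U_1,U_2\}$. There the agreement condition on $U_1\cap U_2$ holds because $\ca{D}(U_1\cap U_2)$ is a singleton by Proposition~\ref{propTrivialSheaf} (when the intersection is nonempty) and is vacuous otherwise.
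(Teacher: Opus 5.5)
Your proof is correct. Your primary argument differs from the paper's; the paper's proof is the route you sketch in your last paragraph. It invokes Proposition~\ref{propTrivialSheaf} to get $\ca{D}(U_1\cap U_2)=\{\ast\}$, concludes that $v_1$ and $v_2$ agree on the overlap, and applies the gluing axiom of Definition~\ref{definSheaf}. Uniqueness, which you correctly attribute to locality, is left implicit there.

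Your main route instead works inside the explicit model of Remark~\ref{remConcreteLimSets}. You build $v$ coordinatewise and use the singleton hypothesis only to make the coordinates on $U_1\cap U_2$ well defined. Compatibility is then automatic, because upward-closedness of each $U_i$ rules out any comparable pair $s_1\leqslant s_2$ with $s_1\in U_1\setminus U_2$ and $s_2\in U_2\setminus U_1$.

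This buys transparency. It shows exactly where the hypothesis enters and where the topology does. It gives uniqueness for free, since coordinates are determined. It also handles $U_1\cap U_2=\varnothing$ without comment, a case the paper's one-line argument leaves implicit, since presheaves in Definition~\ref{definPresheaf} are only defined on nonempty opens.

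The paper's route, in exchange, uses nothing about Alexandrov spaces beyond the triviality of $\ca{D}$ on the overlap. It therefore transfers verbatim to sheaves on arbitrary topological spaces, where Proposition~\ref{propTrivialSheaf} still holds, as the paper remarks. Your argument depends on the limit formula~\eqref{eqInvLimit}, which is specific to Alexandrov topologies.
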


\begin{proof}
According to proposition~\ref{propTrivialSheaf}, we have $\ca{D}(U_1\cap U_2)=\{\ast\}$, a singleton. Therefore, $v_1|_{U_1\cap U_2}=v_2|_{U_1\cap U_2}$. Whereas the sections are compatible on the overlap, they extend to the common section, according to the gluing property from Definition~\ref{definSheaf}.
\end{proof}

\subsection{Induced sheaf, restriction of a sheaf}

Each continuous map of topological spaces $f\colon X\to Y$ induces direct  and  inverse image functors on categories of sheaves
\[
f_*\colon \Shvs(X;\Vv)\to \Shvs(Y;\Vv), \qquad f^*\colon \Shvs(Y;\Vv)\to \Shvs(X;\Vv).
\]
We will not need this construction in its full generality, but provide it for the case of posets. In this case the definition of the functor $f^*$ (inverse image of a sheaf) is significantly simplified at the conceptual level ---  and  as a result, it finds numerous applications in applied topology~\cite{Curry} and contemporary approaches to neural networks~\cite{ShDif, AyzSheaves}.

Let $f\colon S\to T$ be a morphism of posets (equivalently, a continious map between the corresponding Alexandrov topological spaces $f\colon X_S\to X_T$).

\begin{con}\label{conDirectImage}
Let $\ca{F}\in\Shvs(X_S;\Vv)$ be a sheaf on the Alexandrov topological space $X_S$. Consider a presheaf $f_*\ca{F}$ on the topological space $X_T$, whose values on open sets $U\in\Omega_T$  and inclusions $U\supseteq V$ are defined by
\[
f_*\ca{F}(U)=\ca{F}(f^{-1}(U)),\qquad f_*\ca{F}(U\supseteq V)=\ca{F}(f^{-1}(U)\supseteq f^{-1}(V)).
\]
Presheaf $f_*\ca{F}$ is known to be a sheaf~\cite{Iversen}. It is called \emph{the direct image} of $\ca{F}$ under the continuous map $f$.
\end{con}

We note that Construction~\ref{conDirectImage} is applicable without changes to continuous maps between any topological spaces. The map $f_*$ is functorial, which means that it is a correctly defined functor from the category $\Shvs(X_S;\Vv)$ to the category $\Shvs(X_T;\Vv)$.

The inverse image functor is more difficult to define, because an image of an open set may not be open under a continuous map. But in the case of posets  and  Alexandrov topological spaces, the construction of the inverse functor image is simplier due to the equivalence between a sheaf  and  the diagram of its stalks. Suppose again that $f\colon S\to T$ is a morphism of posets  and  $\ca{G}\in\Shvs(X_T;\Vv)$ is a sheaf on the topological space $X_T$. According to Proposition~\ref{propDiagSheaf}, it corresponds to a diagram $G\colon \cat(T)\to\Vv$.

\begin{con}\label{conInverseImage}
\emph{The inverse image} of the diagram $G$ on $T$ (or the sheaf $\ca{G}$ on $X_T$) under a morphism $f\colon S\to T$ is
a diagram $f^*G$ on the poset $S$ (or a sheaf $f^*\ca{G}$ on $X_S$), which is defined by the formulas:
\[
f^*G(s)=G(f(s)),\qquad f^*G(s\leqslant s')=G(f(s)\leqslant f(s')).
\]
\end{con}

This definition of inverse image in terms of diagrams easily implies that $f^*$ is functorial, that is it defines a functor $f^*\colon\Shvs(X_T;\Vv)\to \Shvs(X_S;\Vv)$. This functor is called \emph{the functor of inverse image} induced by the continuous map $f$.

\begin{rem}\label{remRestrictionOfSheaf}
A fairly important special case is the case when the map $f\colon S\hookrightarrow T$ is an exact embedding of posets (or, equivalently, the corresponding topological spaces, see Remark~\ref{remExactInclusion}). In this case, the set $S$ (to be precise, its image $A=f(S)$) is interpreted as a subset of $T$, and the sheaf $f^*\ca{G}$ is interpreted as a restriction of the sheaf $\ca{G}$ to the subset~$A$. It is natural to denote the inverse image $f^*\ca{G}$ as $\ca{G}|_A$ in this particular case.

If $A\subset T$ is an open subset in the topology of $X_T$, then we have $\ca{G}|_A(U)=\ca{G}(U)$ for each open subset $U\subseteq A$.
Notice that all open subsets of the (induced) topology on $X_A$ are also open in the topology of $X_T$.
\end{rem}

\subsection{Flabby sheaves}

\begin{defin}\label{defFlabby}
A $\FinSets$-valued sheaf $\ca{F}$ on a topological space $(X,\Omega_X)$ is called \emph{flabby} if, for each pair of nested open sets $U,V\in\Omega_X$, the restriction morphism $\ca{F}(U\supseteq V)$ is surjective.
\end{defin}

The definition is similar for sheaves valued in other concrete categories, such at $\Vect_\ko$ and $\Mod_R$, but we won't need this bigger generality. Informally, flabby sheaves are the sheaves for which any section on a smaller open set can be extended to a section on a bigger open set.

\begin{rem}\label{remFlabbyOnOpen}
If a map $S\hookrightarrow T$ is an exact embedding  and  $A=f(S)$ is open in the topological space $X_T$, then, from the flabbiness of $\ca{G}$ on $X_T$, it follows the flabbiness of its restriction $\ca{G}|_A=f^*\ca{G}$ on $A$, see Remark~\ref{remRestrictionOfSheaf}. Indeed, each restriction morphism in the sheaf $\ca{G}|_A$ is also a restriction morphism in the sheaf $\ca{G}$; if all the morphisms of the latter sheaf are surjective, then the same holds for any subset of them, in particular for the set of morphisms of the restricted sheaf.
\end{rem}

\section{Overview: combinatorial and homotopy topology}\label{secOverviewCombTop}

\subsection{Geometric realization of posets}

In combinatorial topology, there is a classical way to transform a poset into a simplicial complex.

\begin{defin}
Let $S$ be a poset. \emph{The order complex} of $S$ is a simplicial complex $\ord S$, whose vertex set coincides with $S$  and  whose simplices are chains  (well-ordered subsets) in $S$. The standard geometric realization $|\ord S|$ of the simplicial complex $\ord S$ is called \emph{the geometric realization} of $S$  and  is denoted by $|S|$.
\end{defin}

All posets in this paper are finite. Hence, their geometric realizations are finite CW-complexes.

\subsection{Homotopy colimits}

Let $D\colon \cat(S)\to \Vv$ be a $\Vv$-valued diagram on a poset, where $\Vv$ is either the category $\Sets$, or the category $\Top$ (for our main results we will use $\FinSets$ and assume all sets have discrete topology). We refer to~\cite{WZZ} for accessible exposition of homotopy colimits of diagrams over finite posets.

Let $s_1\leqslant s_2$ in $S$. There are two continuous maps: a map of a diagram $D(s_1\leqslant s_2)\colon D(s_1)\to D(s_2)$ and an embedding of cones $i(s_2\geqslant s_1)\colon |S_{\geqslant s_2}|\to |S_{\geqslant s_1}|$.

\begin{defin}
\emph{The homotopy colimit} of a diagram $D$ (on a poset $S$) is the topological quotient space
\[
\hocolim_{S}D=\left(\bigsqcup\nolimits_{s\in S}|S_{\geqslant s}|\times D(s)\right)/\simc,
\]
where the equivalence relation $\sim$ is defined by the condition: $(x,D(s_1\leqslant s_2)(y))\sim (i(s_2\geqslant s_1)(x),y)$ for any $s_1\leqslant s_2$, $x\in |S_{\geqslant s_2}|$, $y\in D(s_1)$.
\end{defin}

\begin{rem}
In more abstract terms, the homotopy colimit is the coend of the profunctor $\ca{H}\colon \cat(S)^{\op}\times\cat(S)\to\Top$, $\ca{H}(s_1,s_2)=|S_{\geqslant s_1}|\times D(s_2)$.
\end{rem}

\begin{rem}
Note that, for the diagram $\ast$ which contains only singleton spaces (that is $\ast(s)=\pt$ for all $s\in S$), the homotopy colimit coincides with the geometric realization of the underlying poset: $\hocolim_S \ast=|S|$.
\end{rem}

\subsection{Completion along a diagram}\label{completion}

In case the values of the diagram $D$ are finite sets with discrete topology, the homotopy colimit can be easily defined by the following construction.

\begin{con}\label{conCompletion}
Let $S$ be a poset and $D\colon \cat(S)\to \FinSets$ an arbitrary diagram. Consider the set
\[
S^D=\bigsqcup\nolimits_{s\in S}D(s)
\]
and define a partial order on it: $d_1\leqslant d_2$ (for $d_1\in D(s_1)$, $d_2\in D(s_2)$) if and only if the following holds true
\[
\begin{cases}
  s_1\leqslant s_2 \\
  D(s_1\leqslant s_2)(d_1)=d_2.
\end{cases}
\]
The set $S^D$ with this partial order will be called \emph{the completion of the poset} $S$ along the diagram $D$. Note that there is a canonical monotonic map from the completion to the original set
\[
q_D\colon S^D\to S,
\]
which, for each $s\in S$, maps the elements $d\in D(s)\subset S^D$ to $s\in S$.
\end{con}

On an abstract level, the completion operation is taking a coend with values not in topological spaces, but in the category of posets.
Taking into consideration the constructions mentioned above, the next proposition easily follows.

\begin{prop}\label{proHocolim}
For any diagram $D\colon\cat(S)\to\FinSets$ of finite sets, there is a natural homeomorphism
\[
\hocolim_SD\cong |S^D|.
\]
\end{prop}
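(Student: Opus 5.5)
The plan is to write down an explicit simplicial isomorphism between the order complex $\ord S^D$ and a simplicial model of $\hocolim_S D$. Since every $D(s)$ is a finite discrete set, the homotopy colimit is already a union of simplices, and it suffices to match those simplices with chains in $S^D$.

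First we describe the points of $\hocolim_S D$. A point of $|S_{\geqslant s}|\times D(s)$ is a pair $(x,y)$, where $y\in D(s)$ and $x$ lies in the open simplex of a unique chain $s\leqslant t_0<t_1<\dots<t_k$ in $S_{\geqslant s}$, so $t_0$ is the minimum of its support. The cone $|S_{\geqslant t_0}|$ includes into $|S_{\geqslant s}|$, so $x=i(t_0\geqslant s)(x)$. The relation $\sim$ therefore identifies $(x,y)$ with $(x,D(s\leqslant t_0)(y))$ in the copy indexed by $t_0$. Every point thus has a \emph{normal form} $(x,d)$ with $d\in D(t_0)$, where $t_0=\min\operatorname{supp}x$. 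We check that $\sim$ is generated by these moves and that the normal form is unique. This holds because two generating moves out of the same pair commute, by functoriality $D(s\leqslant t);D(t\leqslant t')=D(s\leqslant t')$. This bookkeeping is the main point to be careful about, since $\sim$ is given only through its generators.

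Next we define $\Phi\colon\hocolim_S D\to|S^D|$. Send the normal form $(x,d)$, with $x=\sum_j\lambda_j t_j$ and $d\in D(t_0)$, to $\sum_j\lambda_j d_j$, where $d_j=D(t_0\leqslant t_j)(d)$. The elements $d_0<d_1<\dots<d_k$ form a chain in $S^D$ by the definition of the order in Construction~\ref{conCompletion}, again using functoriality. We then check that $\Phi$ is compatible with $\sim$, hence well defined. On each piece $|S_{\geqslant s}|\times\{y\}$ the map $\Phi$ is affine on simplices, and $\sim$ only glues along faces, so $\Phi$ is continuous.

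For the inverse, take a chain $d_0<\dots<d_k$ in $S^D$ with $d_j\in D(t_j)$. Its projection $q_D$ gives a chain $t_0<\dots<t_k$; it is strict because comparable distinct elements lie over distinct points of $S$, since within a fibre $D(s\leqslant s)=\mathrm{id}$. The order condition forces $d_j=D(t_0\leqslant t_j)(d_0)$, so the whole chain is determined by $d_0$ and the underlying chain in $S$. The map $\sum\lambda_j d_j\mapsto[(\sum\lambda_j t_j,d_0)]$ is therefore a well-defined continuous inverse, and in fact a bijection on simplices. Both spaces are compact Hausdorff: $|S^D|$ is a finite CW complex, and the hocolim is a finite union of closed simplices, with Hausdorffness following from the explicit inverse map. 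So the continuous bijection $\Phi$ is a homeomorphism.

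Finally, naturality in $D$ is clear, since a morphism of diagrams acts on the $d$-labels, which commutes with both constructions.
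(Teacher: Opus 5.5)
Your proof is correct. It differs from the paper mainly in that the paper gives no argument at all: it says the proposition ``easily follows'' from the constructions, and later remarks that the completion $S^D$ is the poset-valued analogue of the coend $\int^{s}|S_{\geqslant s}|\times D(s)$ that defines $\hocolim_S D$.

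That conceptual remark explains the naturality in one line, but taken literally it is not a proof. Geometric realization does not commute with colimits of posets in general: gluing two copies of $\{0<1\}$ along their endpoints gives a circle in spaces but a single edge in posets.

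Your normal-form argument supplies exactly the reason it works here. Each piece $|S_{\geqslant s}|\times\{y\}$ is carried by $t\mapsto D(s\leqslant t)(y)$ isomorphically onto $|(S^D)_{\geqslant y}|$. Every chain of $S^D$ is determined by its image in $S$ together with its bottom element. So the identifications in the homotopy colimit match the simplices of $|S^D|$ one-for-one.

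Two small streamlinings:
\begin{enumerate}
  \item Uniqueness of normal forms needs no commutation or confluence argument. The normal-form map is constant on every generating pair of $\sim$, and each point is equivalent to its own normal form, so the classes of $\sim$ are exactly the fibres of that map.
  \item Once the inverse is continuous, the compact--Hausdorff step is redundant. The inverse is continuous because $|S^D|$ carries the coherent topology of its finitely many closed simplices, and on each of them the inverse is an affine map followed by the quotient map. If you keep that step anyway, you only need compactness of the homotopy colimit and Hausdorffness of $|S^D|$, not Hausdorffness of the homotopy colimit.
\end{enumerate}
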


\begin{rem}
The completion operation is a partial order analogue of the simplicial replacement, known in homotopy theory~\cite{Dugger}.
\end{rem}

\subsection{Étale spaces}\label{etalespaces}

This paragraph describes how homotopy colimits from the previous subsection relate to the classical notion of étale space of a sheaf.

\begin{defin}
Let $\ca{F}\colon \cat(\Omega_X\setminus\{\varnothing\})^{op}\to \Vv$ be a presheaf on a topological space $X$. Then the étale space $F(\ca{F})$ of this presheaf is the topological space defined as follows.
\begin{enumerate}
    \item As a set,
    \[
    F(\ca{F}) = \bigsqcup\nolimits_{x\in X}\ca{F}_x;
    \]
    \item The base of topology on $F(\ca{F})$ is defined by the sets
    \[
    U_v = \{v_x\mid x\in U,\: (\ca{F}(U)\to \ca{F}_x)(v) = v_x\}
    \]
    for each open set $U$ and an element $v\in \ca{F}(U)$.
\end{enumerate}
\end{defin}

Recall that there is an equivalence between categories $\PrePos$ and $\AlexTop$, under which any poset $S$ corresponds to Alexandrov topological space $X_S$, see Proposition~\ref{propPosTop}.

\begin{prop}\label{etaleequiv}
    Let $S$ be a poset  and  $D\colon \cat(S)\to \FinSets$ be an arbitrary diagram. Let $\ca{D}$ be the presheaf on the topological space $X_S$ corresponding to this diagram. Then the étale space $F(\ca{D})$ is homeomorphic to $X_{S^D}$, where $S^{D}$ is the completion of the poset $S$ along the diagram $D$.
\end{prop}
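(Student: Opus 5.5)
The plan is to write down an explicit bijection $\Phi\colon F(\ca{D})\to S^D$ and then check that it identifies a basis of the étale topology with a basis of the Alexandrov topology $\Omega_{S^D}$.

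\textbf{Step 1: the bijection.} First compute the stalks. For $s\in S$ the open sets containing $s$ are the upper ideals $U\ni s$. Among them $U_s=S_{\geqslant s}$ is the smallest, and it is cofinal in the directed system of Definition~\ref{defStalk}. Hence the direct limit stabilizes there:
\[
\ca{D}_s=\ca{D}(U_s)=\lim_{\leftarrow\, t\geqslant s}D(t)\cong D(s).
\]
The last isomorphism holds because $s$ is the minimum of $U_s$: by~\eqref{eqInvLimit}, a compatible family $(v_t)_{t\geqslant s}$ is determined by $v_s$ via $v_t=D(s\leqslant t)v_s$. Consequently, as sets,
\[
F(\ca{D})=\bigsqcup_{s\in S}\ca{D}_s\cong\bigsqcup_{s\in S}D(s)=S^D,
\]
and we let $\Phi$ be this identification. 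Under it, the canonical map $\ca{D}(U)\to\ca{D}_x$ becomes the projection of a compatible family $(v_t)_{t\in U}$ to its component $v_x$.

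\textbf{Step 2: basic open sets are open in $X_{S^D}$.} Take an open $U$ and a section $v=(v_t)_{t\in U}\in\ca{D}(U)$. The basic set $U_v$ corresponds to $\{v_t\mid t\in U\}\subset S^D$, and we check that this is an upper ideal. Let $v_t\leqslant d$ with $d\in D(t')$. By definition of the order on $S^D$ this means $t\leqslant t'$ and $d=D(t\leqslant t')v_t$. Since $U$ is an upper set, $t'\in U$, and compatibility of $v$ gives $d=v_{t'}$. So $d$ lies in the set, and every basic set of the étale topology is open in $X_{S^D}$.

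\textbf{Step 3: open sets of $X_{S^D}$ are étale-open.} It suffices to treat the minimal neighbourhoods $(S^D)_{\geqslant d}$ for $d\in D(s)$, since every upper ideal is a union of these. Take $U=U_s$ and let $v\in\ca{D}(U_s)$ be the section with $v_s=d$, i.e. $v_t=D(s\leqslant t)d$. Then $U_v$ is exactly $\{d'\mid d\leqslant d'\}=(S^D)_{\geqslant d}$. Since basic sets of each topology are open in the other, $\Phi$ is a homeomorphism.

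The step most likely to need care is Step 1, the cofinality argument identifying the stalk with $D(s)$ and the map $\ca{D}(U)\to\ca{D}_x$ with projection to a component. Everything after that is bookkeeping with upper ideals. One also has to note that the proposition calls $\ca{D}$ a presheaf, but through~\eqref{eqInvLimit} it is the sheaf of Proposition~\ref{propDiagSheaf}, so the formula for its sections is available.
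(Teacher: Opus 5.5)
Your proposal is correct and follows essentially the paper's own proof. You identify $F(\ca{D})$ with $S^D$ as sets, show that every basic set $U_v$ is an upper ideal, and realize each principal upper set $(S^D)_{\geqslant d}$ as the basic set of the section on $U_s$ determined by $d$. Your Step 1 just makes explicit the identification $\ca{D}_s=\ca{D}(U_s)\cong D(s)$, via cofinality of $U_s$, which the paper treats as holding ``by definition.''
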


\begin{proof}
It can be seen that $S^D$ and $F(\ca{D})$ coincide as sets, by definition. Consider an arbitrary upper order ideal $J$ in $S^D$ and check that this set is open in the topology on $F(\ca{D})$. Indeed, $J$ can be represented as a union of base sets:
\[
J = \bigcup\nolimits_{d\in J}\: (U_s)_d,
\]
where for each $d\in J$ we denote by $s\in S$ such element of the poset that $d\in D(s)$ and by $U_s$, as usual, the minimal open neighbourhood of $s$ in $S$.

Now consider an arbitrary open set $A$ in $F(\ca{D})$ and check that this set is an upper order ideal in terms of the order on $S^D$. Indeed, $A = \bigcup\nolimits U_t$ for some open sets $U\in X_S$ and elements $t\in \ca{D}(U)$. We take an arbitrary pair of elements in $F(\ca{D})$, $d_1,d_2\in A$ such that $d_1\leq d_2$. Then there are $U$ and $t\in \ca{D}(U)$ such that $d_1\in U_t$. This means that $d_1 = t_{s_1}$ for some $s_1\in U$. The inequality $d_1\leq d_2$ means that $s_1\leq s_2$ for $d_2\in D(s_2)$. Therefore, $s_2\in U$. Thus, there is a map $\ca{D}(U)\to \ca{D}_{s_2}$, which, by properties of sheaves, equals the composition of the map $\ca{D}(U)\to \ca{D}_{s_1}$ with the map $\ca{D}(s_1\leq s_2)$, and therefore this map sends $t$ to $d_2$, which means $d_2\in U_t$, that is $d_2\in A$.
\end{proof}

\begin{rem}
In the remaining part of the paper we use only the language of completions along a diagram to formulate main theorems (see Section~\ref{secInflation}). However, Proposition~\ref{etaleequiv} shows that all results can be reformulated in terms of étale spaces. We advise a curious reader to follow through the paper taking into consideration this equivalent language. It adds a topological perspective onto our combinatorial approach.
\end{rem}

\subsection{Simplicial posets}\label{subsecSimpPosets}

\begin{defin}\label{definSimpPoset}
A poset $S$ is called \emph{simplicial poset} if the following two conditions hold true
\begin{enumerate}
  \item There exists a unique least element $\hat{0}\in S$.
  \item For each $s\in S$, the subset $S_{\leqslant s}$ is isomorphic to a Boolean lattice.
\end{enumerate}
\end{defin}

The elements of a simplicial poset are called \emph{simplices}. The number $\rk s-1$, where $\rk s$ denotes the rank of the Boolean lattice $S_{\leqslant s}$, is called \emph{the dimension} of the simplex. We use the standard terminology: simplices of dimension $0$ are called \emph{vertices}, of dimension $1$ --- \emph{edges},  and  so on. From the existence of a rank function, it follows that simplicial posets are graded. The maximal dimension the simplices of $S$ is called the dimension of $S$. A simplicial poset is called \emph{pure} if all its maximal simplices have the same dimension.

\begin{rem}\label{remSimpPosetFromComplex}
The set of simplices of an arbitrary finite simplicial complex (including the empty simplex), ordered by inclusion, forms a simplicial poset.
\end{rem}

If $S$ is simplicial, then we call the set $S\setminus\{\hat{0}\}$ a \emph{geometric simplicial poset}. By definition, the geometric realization of a simplicial poset is the geometric realization of the corresponding geometric simplicial poset. This means the empty simplex is never considered in the geometric realization.

\begin{defin}
If $S$ is simplicial and  $s\in S$, then the poset $\link_Ss=S_{\geqslant s}$ is called the \emph{link} of the simplex $s$. A link is a simplicial poset.
\end{defin}

It follows that the geometric realization $|\link_Ss|$ is the geometric realization of the poset $S_{>s}=S_{\geqslant s}\setminus \{s\}$ (with the minimal element removed to make it geometric). Therefore we have $|\link_Ss|\cong |S_{>s}|$.

In the following we denote simplices of simplicial posets by letters $I,J$, etc. instead of $s,t$ as in general posets.

\subsection{Cohen--Macaulay simplicial posets}\label{subsecCMposets}

\begin{defin}
A simplicial poset $S$ is called \emph{homotopically Cohen–Macaulay} of dimention $n-1$ if the following conditions hold true
\begin{enumerate}
  \item $S$ is pure of dimension $n-1$;
  \item $|S|$ is homotopy equivalent to a wedge of $(n-1)$-dimensional spheres;
  \item For each (non-empty) simplex $I\in S$,  the geometric realization of its link, $|\link_SI|$, is homotopy equivalent to a wedge of $(n-2-\dim I)$-dimensional spheres.
\end{enumerate}
\end{defin}

\begin{rem}\label{remZeroWedge}
Recall the standard convention. A wedge of zero-many spheres (of any dimension) is assumed a contractible space. Vice versa, a contractible space is assumed a point, up to homotopy, and therefore it is a wedge of spheres (zero many wedge summands).
\end{rem}

\begin{rem}
It follows from the definition that, whenever $S$ is homotopically Cohen-Macaulay poset of dimension $n-1$, then $\link_SI$ is homotopically Cohen-Macaulay poset of dimension $n-2-\dim I$.
\end{rem}

\begin{rem}
We say that $S$ is homotopically Cohen--Macaulay geometric simplicial poset if it is obtained from homotopically Cohen--Macaulay simplicial poset by removing the least element. The necessity to remove this little fellow (the empty simplex) becomes really annoying since, in the paper, we sometimes reverse the order, and the least element becomes the greatest element. Anyhow, when we consider diagrams over simplicial posets, we never associate any meaningful set to the empty simplex: this is why we always have to remove it. Additional terminology, ``geometric simplicial poset'', was introduced for convenience of exposition.
\end{rem}

\section{The main construction}\label{secInflation}

\subsection{Inflation of a poset}

\begin{defin}\label{definInflation}
Let $P$ be a poset, and $D\colon \cat(P^*)\to \FinSets$ be a diagram on the dual poset $P^*$. In other words, to each element $I\in P$, a finite set $D(I)$ is assigned, and whenever $I\leqslant J$, there is a map $D(J\geqslant I)\colon D(J)\to D(I)$. \emph{Inflation} of $P$ along the diagram $D$ is the poset
\[
P_D=((P^*)^D)^*,
\]
that is the completion of $P^*$ along the diagram $D$ followed by reversal of the order.
\end{defin}

\begin{rem}\label{remMapQd}
From Construction~\ref{conCompletion} it follows immediately that there is a canonical monotonic map $q_D\colon P_D\to P$.
\end{rem}

We recall that a geometric simplicial poset is a poset that becomes simplicial after adding a formal empty simplex, the least element $\hat{0}$, see Subsection~\ref{subsecSimpPosets}.

\begin{prop}
If $P$ is a geometric simplicial poset, then $P_D$ is also a geometric simplicial poset.
\end{prop}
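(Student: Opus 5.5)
We must show that $P_D\sqcup\{\hat 0\}$ is simplicial. Since a formal least element is being added, this amounts to showing that for every element $d\in P_D$ the lower interval $(P_D)_{\leqslant d}\sqcup\{\hat 0\}$ is a Boolean lattice. The plan is to prove that the canonical map $q_D\colon P_D\to P$ of Remark~\ref{remMapQd} restricts to an order isomorphism
\[
q_D\colon (P_D)_{\leqslant d}\to P_{\leqslant I},\qquad d\in D(I).
\]
Once this is established, the claim follows at once from the hypothesis that $P_{\leqslant I}\sqcup\{\hat 0\}$ is Boolean.

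First we unwind the order on $P_D$. By Construction~\ref{conCompletion} applied to $P^*$, followed by reversal, elements of $P_D$ are pairs $(J,e)$ with $e\in D(J)$. The relation $(J,e)\leqslant (I,d)$ holds in $P_D$ if and only if $(I,d)\leqslant (J,e)$ in $(P^*)^D$. This means $I\leqslant_{P^*}J$, that is $J\leqslant_P I$, and $D(I\geqslant J)(d)=e$.

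Next we analyse the lower interval. Fix $d\in D(I)$. The elements below $(I,d)$ are exactly the pairs $(J,D(I\geqslant J)(d))$ with $J\leqslant I$. The component $e$ is determined uniquely by $J$, so $q_D$ restricted to $(P_D)_{\leqslant d}$ is a bijection onto $P_{\leqslant I}$; it is monotone by Remark~\ref{remMapQd}.

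For the inverse to be monotone, we must show the following. Suppose $J_1\leqslant J_2\leqslant I$, and set $e_k=D(I\geqslant J_k)(d)$. Then $(J_1,e_1)\leqslant (J_2,e_2)$, which requires $D(J_2\geqslant J_1)(e_2)=e_1$. This is exactly functoriality of the diagram $D$: the composite $D(I\geqslant J_2);D(J_2\geqslant J_1)$ equals $D(I\geqslant J_1)$. Hence the restriction of $q_D$ is an exact isomorphism of posets.

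Finally, the formal least element $\hat 0$ lies below everything, so $(P_D\sqcup\{\hat0\})_{\leqslant d}\cong (P\sqcup\{\hat0\})_{\leqslant I}$, which is Boolean. There is a unique least element by construction.

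The only genuinely delicate step is keeping the order reversals straight in the definition $P_D=((P^*)^D)^*$. This is where one must make sure that the lower intervals in $P_D$ correspond to the fibres along which the maps of $D$ run. These maps go downward in $P$, so each element has a unique ``face'' over each face of its image. That orientation is exactly what makes lower intervals, rather than upper ones, map isomorphically.
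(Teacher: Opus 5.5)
Your proof is correct and follows the same route as the paper: both show that $q_D$ restricts to an isomorphism $(P_D)_{\leqslant d}\cong P_{\leqslant I}$, using that over each face $J\leqslant I$ the only element below $d$ is $D(I\geqslant J)(d)$. You also spell out the step the paper leaves as ``easily checked'', namely monotonicity of the inverse via functoriality of $D$, and you handle the formal $\hat 0$ explicitly.
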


\begin{proof}
Consider an arbitrary element $d\in D(q)$, $q\in P$. Then we have
\[
(P_{D})_{\leqslant d} = \{t\mid \exists p\in P,\: t\in D(p),\: p\leq q,\: D(p\geq q)(d)=t\}.
\]
We note that for each $t\in (P_{D})_{\leqslant d}$ such $p\in P$ from the formula above is unique, and vice versa for each $p$ where $p\leq q$ there is a unique corresponding $t\in (P_{D})_{\leqslant d}$. Thus, there is a bijective correspondence between $P_{\leqslant q}$ and $(P_{D})_{\leqslant d}$, under which $q$ corresponds to $d$. It also can be easily checked that the poset structure does not change under this correspondence, which means it is an isomorphism. All lower cones of $P_D$ are therefore boolean lattices.
\end{proof}

\subsection{Example: inflated complexes of Wachs}

The following construction was introduced by Wachs in~\cite{Wachs}.

\begin{con}
Consider a finite simplicial complex $K$ with the vertex set $V=\{1,\ldots,m\}$ and a sequence of positive integers $(n_1,\ldots, n_m)$, one for each vertex of $K$. Consider the simplicial complex $K(n_1,\ldots,n_m)$ defined as follows
\begin{enumerate}
  \item The vertex set of $K(n_1,\ldots,n_m)$ is $V=\bigsqcup_{i\in[m]}[n_i]$. In other words, each vertex of $K$ is ``inflated'' into $n_i$ many copies. Let $p\colon V\to [m]$ denote the natural projection, which returns the index of summand in the disjoint union to which an element $v\in \bigsqcup_{i\in[m]}[n_i]$ belongs.
  \item A tuple $\{v_1,\ldots,v_s\}\subset V$ is a simplex of $K(n_1,\ldots,n_m)$ if and only if $p(v_1),\ldots,p(v_s)$ are pairwise distinct and the tuple $\{p(v_1),\ldots,p(v_s)\}$ is a simplex of $K$.
\end{enumerate}
The complex $K(n_1,\ldots,n_m)$ is called \emph{the vertex-inflation}\footnote{Originally it was called just \emph{inflation} of $K$. To distinguish it from the constructions of our paper, we give this term a more specific name.} of $K$.
\end{con}

This construction happens to be a particular case of inflations of simplicial posets along a diagram, introduced in Definition~\ref{definInflation}.

\begin{con}\label{conDiagramForVertexInflation}
As before, consider a simplicial complex $K$ (treated as geometric simplicial poset), and a sequence $(n_1,\ldots, n_m)$ of numbers corresponding to its vertices. Let $N_i$ be a finite set of cardinality $n_i$. Define the diagram $D_{\ver}\colon \cat(K^*)\to \FinSets$ as follows: each vertex $i$ of $K$ is mapped to $N_i$, while, for a general non-empty simplex $I\in K$, we define $D_{\ver}(I)$ by
\begin{equation}
    D_{\ver}(I) = \prod\nolimits_{i}N_i,\quad \text{over all vertices $i$ of $I$}.
\end{equation}
We also define the map $D_{\ver}(I\geq J)$ between $I$ and each its face $J$ as a natural projection (a face $J$ certainly contains a smaller number of vertices, so each factor $N_i$ that appears in $D_{\ver}(J)$ also appears in $D_{\ver}(I)$ and the projection is well defined).
\end{con}

The next statement is straightforward.

\begin{prop}\label{multiclique2}
The vertex-inflated simplicial complex $K(n_1,\ldots,n_m)$ coincides with the inflation $K_{D_{\ver}}$ of $K$ along the projection diagram $D_{\ver}$ described above.
\end{prop}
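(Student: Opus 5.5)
The plan is to build an explicit bijection between the elements of $K_{D_{\ver}}$ and the nonempty simplices of $K(n_1,\ldots,n_m)$, and then check that it respects the orders. Unwinding Definition~\ref{definInflation} and Construction~\ref{conCompletion}, an element of $K_{D_{\ver}}=((K^*)^{D_{\ver}})^*$ is a pair $(I,d)$. Here $I$ is a nonempty simplex of $K$ and $d\in D_{\ver}(I)=\prod_{i\in I}N_i$, so $d$ is a choice function $i\mapsto d_i\in N_i$ on the vertices of $I$.

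To such a pair we assign the vertex set $\Phi(I,d)=\{d_i\mid i\in I\}\subset \bigsqcup_i N_i=V$, identifying $N_i$ with $[n_i]$. By construction the elements of $\Phi(I,d)$ have pairwise distinct projections $p$, and the set of projections is exactly $I\in K$. Hence $\Phi(I,d)$ is a simplex of $K(n_1,\ldots,n_m)$. Conversely, a simplex $\{v_1,\ldots,v_s\}$ determines $I=\{p(v_1),\ldots,p(v_s)\}$ and the choice function $d_{p(v_k)}=v_k$. This gives an inverse, so $\Phi$ is a bijection onto the nonempty simplices.

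The remaining step is order compatibility, and it is mostly bookkeeping about the two order reversals. In $(K^*)^{D_{\ver}}$ we have $(J,e)\leqslant(I,d)$ exactly when $J\leqslant_{K^*} I$, i.e. $J\supseteq I$ in $K$, and $D_{\ver}(J\geqslant I)(e)=d$. Reversing the order once more, $(I,d)\leqslant(J,e)$ in $K_{D_{\ver}}$ if and only if $I\subseteq J$ and the projection $\prod_{j\in J}N_j\to\prod_{i\in I}N_i$ sends $e$ to $d$. That condition says $e_i=d_i$ for every $i\in I$. This is precisely $\Phi(I,d)\subseteq\Phi(J,e)$: one direction is immediate, and for the other, containment forces $I\subseteq J$ by applying $p$, and then matching coordinates, since $N_i$ contributes exactly one element to each side.

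Therefore $\Phi$ is an isomorphism of posets between $K_{D_{\ver}}$ and the face poset of $K(n_1,\ldots,n_m)$, with the empty simplex omitted. This is the claimed coincidence. I expect no real obstacle; the only care needed is tracking the direction of the order through the double dualization in Definition~\ref{definInflation}.
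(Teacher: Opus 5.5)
Your proof is correct. It is the routine unwinding of definitions the paper has in mind: the paper writes no proof and only calls the statement straightforward, and your bijection $(I,d)\mapsto\{d_i\mid i\in I\}$, together with the order check through the double reversal $((K^*)^{D})^*$, supplies exactly that check, including the use of disjointness of the $N_i$ to recover $e_i=d_i$ from $\Phi(I,d)\subseteq\Phi(J,e)$.
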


\begin{rem}
An open set $U$ in the Alexandrov topology corresponding to $K^*$ is an upper order ideal in $K^*$. It means that $U$ is a simplicial subcomplex in $K$ (see details in Remark~\ref{remOpenSubcomplexes} below). It is fairly straightforward to prove the following formula for the values of the sheaf $\ca{D}_{\ver}$ corresponding to the diagram $D_{\ver}$:
\begin{equation}\label{eqVertSheaf}
\ca{D}_{\ver}(U)=\prod\nolimits_{i}N_i,\quad \text{over all vertices $i$ of $U$}.
\end{equation}
\end{rem}

\begin{rem}\label{remVertexInflationFlabby}
Since all sets $N_i$ in Construction~\ref{conDiagramForVertexInflation} are non-empty it follows that the sheaf $\ca{D}_{\ver}$ corresponding to the diagram $D_{\ver}$ is inhabited. Flabbiness of the sheaf $\ca{D}_{\ver}$ follows from formula~\eqref{eqVertSheaf}. Indeed, all restriction maps of $\ca{D}_{\ver}$ are projections hence surjective.
\end{rem}

\subsection{Example: cliques in multigraphs}

We use the term ``multigraph'' for graphs having multiple edges, but not loops, see details in~\cite{AyzRukh}.

\begin{defin}\label{definMultigraph}
A multigraph is a $1$-dimensional simplicial poset.
\end{defin}

Any multigraph can be considered as a particular case of an inflation of a simple graph in the following sense. Consider a multigraph $G_\mu$ where $G=(V,E)$ is the underlying simple graph, and $\mu\colon E\to \Zo_{>0}$ is the multiplicity function. Consider the poset $P = \ca{F}(G)=V\sqcup E$ of faces (vertices and edges) of this graph, ordered by inclusion. Finally, consider the diagram $D_{\edge}\colon \cat(P^*)\to \FinSets$ that maps each vertex $v\in V$ of the graph to a singleton set and each edge $e\in E$ to the set of cardinality $\mu(e)$. The next statement is straightforward.

\begin{prop}\label{multigraph}
    For a multigraph $G_\mu$, the poset $P = \ca{F}(G)$ of faces of the corresponding simple graph, and the diagram $D_{\edge}\colon \cat(P^*)\to \FinSets$ we have
    \begin{equation}
        |P_D|\simeq |G_\mu|.
    \end{equation}
\end{prop}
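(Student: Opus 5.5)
The plan is to show that $P_D$ is, as a poset, isomorphic to the face poset of $G_\mu$ (nonempty simplices, $G_\mu$ viewed as a $1$-dimensional simplicial poset). The claimed homotopy equivalence, in fact a homeomorphism, then follows because the geometric realization of a geometric simplicial poset is the realization of its order complex, which is the barycentric subdivision of $G_\mu$. Here $D=D_{\edge}$, and the restriction maps are forced: for a vertex $v$ of an edge $e$, the map $D(e\geqslant v)\colon D(e)\to D(v)$ goes to a singleton, so it is the unique constant map. Hence $D_{\edge}$ is a well-defined functor on $\cat(P^*)$, and commutativity is automatic since every target set of a nontrivial composite is a singleton.

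First I unwind Definition~\ref{definInflation}. The underlying set of $P_D$ is $\bigsqcup_{p\in P}D(p)$, which consists of one element $\hat v$ for each vertex $v\in V$ and $\mu(e)$ elements $e_1,\ldots,e_{\mu(e)}$ for each edge $e\in E$. For the order, apply Construction~\ref{conCompletion} to $P^*$ and then reverse. We get $x\leqslant y$ in $P_D$, with $x\in D(p)$ and $y\in D(q)$, exactly when $p\leqslant q$ in $P$ and $D(q\geqslant p)(y)=x$. Since the vertex sets are singletons, this yields $\hat v<e_k$ for every copy $e_k$ of an edge $e\ni v$, and no other strict relations: there are no relations between distinct copies of the same edge, nor between vertices. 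This matches the face poset of $G_\mu$, where each of the $\mu(e)$ parallel edges between the endpoints of $e$ lies above both endpoints. I will state this bijection explicitly and check that it preserves and reflects order.

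Finally, $|P_D|=|\ord P_D|$ is a graph. It has a vertex for each $\hat v$ and each $e_k$, and an edge $\hat v$--$e_k$ for each incidence. This is the barycentric subdivision of the multigraph $G_\mu$: each parallel edge is subdivided at its midpoint $e_k$. It is therefore homeomorphic to $|G_\mu|$.

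There is no real obstacle. The only point needing care is the bookkeeping of the double order reversal in Definition~\ref{definInflation}. One must confirm that the completion is taken on $P^*$, where maps go from edges to vertices, and that after reversing, edges sit above vertices rather than below. One must also keep in mind the convention that $|G_\mu|$ means the realization of the geometric simplicial poset, which is realized via its order complex.
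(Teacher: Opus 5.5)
Your proposal is correct and is exactly the argument the paper leaves implicit (it simply calls the statement straightforward): unwinding the double reversal in $((P^*)^D)^*$ shows that $P_{D_{\edge}}$ is isomorphic to the poset of nonempty faces of $G_\mu$, so the realizations are homeomorphic. Your order bookkeeping is right, and since the paper defines $|G_\mu|$ as the realization of the order complex of the corresponding geometric simplicial poset, the poset isomorphism already gives the homeomorphism without the barycentric-subdivision step.
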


A clique complex (also called Whitney complex) of a graph is the poset of all cliques of the corresponding graph --- which happens to be a simplicial poset, and, moreover, a simplicial complex. This notion is standard and well known in topological combinatorics~\cite[Def.9.1]{Kozlov}, computational topology~\cite{CriKol}, topological data analysis~\cite[Def.2.16]{DeiWang} and beyond. Analogously, \emph{the clique complex} of a multigraph is the poset of all cliques of a multigraph, see~\cite{AyzRukh}. Multiclique complex is a simplicial poset but may fail to be a simplicial complex.

Consider a multigraph $G_\mu$. We claim that its clique complex $mF(G_\mu)$ can be conceptually described as the inflation of clique complex $mF(G)$ of the corresponding simple graph $G$ along the appropriate diagram.

\begin{con}
Consider the clique complex $P = mF(G)$ of the underlying simple graph $G$. We define the diagram $D_{\edge}\colon \cat(P^*)\to \FinSets$ on the dual poset inductively. At first, we define the values on the elements of dimension $0$ and $1$ the same way as in the previous example, see Proposition~\ref{multigraph}: each vertex is mapped to a singleton, each edge is mapped to a set $M_e$ of cardinality $\mu(e)$. For an arbitrary clique $I\in P$ of the simple graph $G$, we define the value $D_{\edge}(I)$ by
\begin{equation}
    D_{\edge}(I) = \prod\nolimits_{e}M_e,\quad \text{over all edges $e$ in $I$}.
\end{equation}
We define the structure map $D_{\edge}(I\geq J)$ between $I$ and each its face $J$ as a natural projection (a face $J$ certainly contains a smaller number of edges, so each factor $M_e$ that appears in $D_{\edge}(J)$ also appears in $D_{\edge}(I)$ and the projection is well defined).
\end{con}
The next statement is almost tautological.

\begin{prop}\label{multiclique2}
    For a multigraph $G_\mu$, its multiclique complex $mF(G_\mu)$ is isomorphic to the inflation $P_{D_{\edge}}$ of the clique complex $P=mF(G)$ along the projection diagram $D_{\edge}$ described above.
\end{prop}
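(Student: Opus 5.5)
The plan is to build an explicit order-preserving bijection between the elements of $P_{D_{\edge}}$ and the cliques of the multigraph $G_\mu$, i.e. the elements of $mF(G_\mu)$, and then check that it identifies the two partial orders.

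First I unwind Definition~\ref{definInflation}. An element of $P_{D_{\edge}}$ is a pair $(I,x)$, where $I$ is a nonempty clique of $G$ and $x\in D_{\edge}(I)=\prod_{e\subseteq I}M_e$. So $x$ is a choice, for each edge $e$ of $I$, of one of the $\mu(e)$ parallel copies of $e$. When $I$ is a vertex the product is empty, so $x$ is the unique point.

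By Construction~\ref{conCompletion}, applied to $P^*$ and then reversed, the order is as follows:
\[
(J,y)\leqslant (I,x)\iff J\subseteq I \text{ and } D_{\edge}(I\geq J)(x)=y.
\]
Since the structure maps are coordinate projections, the second condition says exactly that $y$ is the restriction of $x$ to the edges of $J$.

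Next I recall what a clique of $G_\mu$ is, following~\cite{AyzRukh}. It is a set of vertices $I$ forming a clique in $G$, together with a choice of one multi-edge over each pair of vertices of $I$. A face of such a clique is obtained by taking a vertex subset $J\subseteq I$ and keeping the chosen multi-edges between vertices of $J$. This is precisely the data $(I,x)$ above, so the map $(I,x)\mapsto$ (the clique on $I$ with edges chosen by $x$) is a bijection.

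Under this bijection the face relation of $mF(G_\mu)$ corresponds to "$J\subseteq I$ and $y=x|_J$". That is the order on $P_{D_{\edge}}$ computed above, so the bijection is an order isomorphism. As a sanity check, both posets are simplicial: $mF(G_\mu)$ by~\cite{AyzRukh}, and $P_{D_{\edge}}$ by the preceding Proposition on inflations of geometric simplicial posets. The lower cone of each element is the Boolean lattice on the vertex set of $I$.

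The only point needing care is the definition of $mF(G_\mu)$ itself. One must make sure that a multiclique is determined by its vertex set together with its edge choices, with nothing extra such as orientations or higher-dimensional data. One must also make sure that faces are exactly the induced sub-choices. Once that definition is fixed the statement is tautological, so I expect this bookkeeping to be the main obstacle, not any real mathematical difficulty.
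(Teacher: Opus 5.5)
Your proposal is correct and follows the same route as the paper, which gives no argument beyond calling the statement almost tautological. Your unwinding of $P_{D_{\edge}}=((P^*)^{D_{\edge}})^*$ into pairs $(I,x)$ with $(J,y)\leqslant(I,x)$ iff $J\subseteq I$ and $y=x|_J$, matched against multicliques of $G_\mu$ ordered by inclusion, is exactly that tautology made explicit. The only implicit step is fixing a bijection between each $M_e$ and the $\mu(e)$ parallel edges of $G_\mu$ over $e$, which you in effect do when you read $x$ as a choice of parallel copies.
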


\begin{rem}
We have the following formula for the values of the sheaf $\ca{D}_{\edge}$ corresponding to the diagram $D_{\edge}$:
\begin{equation}\label{eqEdgeSheaf}
\ca{D}_{\edge}(U)=\prod\nolimits_{e}M_e,\quad \text{over all edges $e$ of $U$}.
\end{equation}
Here $U$ is a simplicial subcomplex of $P$.
\end{rem}

\begin{rem}\label{remEdgeInflationFlabby}
Since all sets $M_e$ in Construction~\ref{conDiagramForVertexInflation} are non-empty, the sheaf $\ca{D}_{\edge}$ is inhabited. It is also flabby as follows from formula~\eqref{eqEdgeSheaf} as all restriction maps of $\ca{D}_{\edge}$ are projections hence surjective.
\end{rem}

\section{The main results}\label{secFormulations}

Let $\ca{F}(\Delta^{n-1})$ denote the set of all non-empty faces of the $(n-1)$-dimensional simplex, ordered by inclusion.

\begin{thm}\label{thmSimplex}
Let $P=\ca{F}(\Delta^{n-1})$  and  $D\colon \cat(P^*)\to \FinSets$ be a diagram on the dual poset such that its corresponding sheaf $\ca{D}$ is inhabited  and  flabby. Then the geometric realization of the inflation $|P_D|$  is homotopy equivalent to a wedge of $(n-1)$-dimensional spheres.
\end{thm}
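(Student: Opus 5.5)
We argue by induction on $n$, proving simultaneously the analogous statement for the boundary: if $P'=\ca{F}(\partial\Delta^{n-1})$ and the restricted sheaf on it is inhabited and flabby, then $|P'_{D}|$ is a wedge of $(n-2)$-spheres. Note that $P=\ca{F}(\Delta^{n-1})$ has a greatest element, the top simplex $\sigma$, which is the least element of $P^*$. Hence $\{\sigma\}$ is the only point of $X_{P^*}$ whose minimal neighbourhood is all of $X_{P^*}$, and $D(\sigma)=\ca{D}(X_{P^*})$ is the set of global sections. The inflation $P_D$ decomposes as the union of the inflated boundary $B=(P\setminus\{\sigma\})_{D|}$ and the elements $d\in D(\sigma)$. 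Each such $d$ is a top element whose lower cone $(P_D)_{\leqslant d}$ is a copy of $\ca{F}(\Delta^{n-1})$, as in the proof that inflations of simplicial posets are simplicial. Its boundary is a subcomplex $\partial_d\subseteq B$ isomorphic to $\ca{F}(\partial\Delta^{n-1})$, i.e.\ an $(n-2)$-sphere. Thus $|P_D|$ is obtained from $|B|$ by attaching $|D(\sigma)|$ cells of dimension $n-1$ along embedded spheres.

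The key step is to show that $|B|$ is $(n-3)$-connected and, more precisely, a wedge of $(n-2)$-spheres. The restriction of $\ca{D}$ to the open set $P^*\setminus\{\sigma\}$ is again flabby by Remark~\ref{remFlabbyOnOpen} and is inhabited. The boundary complex, however, is not a simplex, so induction on simplices alone does not suffice. I would therefore strengthen the induction and prove that, for any proper subcomplex $L\subseteq \partial\Delta^{n-1}$ that is a cone (in particular, a full simplex face), the inflation over $L$ is a wedge of spheres of the top dimension or contractible. The argument proceeds by shelling: add the facets of $\partial\Delta^{n-1}$ one at a time. When a facet $F$ is added, the new part is the inflation over $\ca{F}(F)$, which by the induction hypothesis for $n-1$ is a wedge of $(n-2)$-spheres. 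It is glued to what has been built so far along the inflation over $F\cap(\text{previous facets})$, a union of facets of $\partial F$ that forms a shellable ball, or the full sphere at the last step.

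Flabbiness is used exactly here. It guarantees that every section over the intersection extends to a section over $F$, so the intersection inflation sits inside the new piece in such a way that each of its components is connected to the new piece consistently. A homotopy-pushout / gluing-lemma argument then combines pieces that are $(n-3)$-connected along an intersection that is $(n-4)$-connected, yielding an $(n-3)$-connected result. Since the complex has dimension $n-2$, Hurewicz and Whitehead then show it is a wedge of $(n-2)$-spheres. Finally, attaching $(n-1)$-cells to an $(n-2)$-dimensional wedge of spheres gives a space that is $(n-2)$-connected and of dimension $n-1$, hence a wedge of $(n-1)$-spheres. Alternatively, each attaching sphere $\partial_d$ can first be shown to be null-homotopic in a contractible subspace, which yields the wedge directly.

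The main obstacle is controlling the connectivity of the partial unions during the shelling. Inflations over partial boundaries need not be wedges of top-dimensional spheres unless flabbiness propagates, and one must check that the restriction to each relevant open set (a subcomplex of the dual) remains flabby and that the intersection pieces have the right connectivity. I would first try to handle this through a Quillen fiber lemma argument for the map $q_D\colon P_D\to P$: compute the fibers $q_D^{-1}(P_{\geqslant I})$ and use flabbiness to show that each is homotopy equivalent to the inflation of a link, which is again a simplex boundary of smaller dimension. This reduces the connectivity bookkeeping to the induction hypothesis.
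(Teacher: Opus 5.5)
\textbf{There is a genuine gap in the final step.} Your reduction is correct as far as it goes: $|P_D|$ is the inflated boundary $|B|$ with one $(n-1)$-cell attached along $\partial_d$ for each $d\in D(\sigma)$. The shelling of $\partial\Delta^{n-1}$ can also be made to work for $|B|$, provided you state the strengthened hypothesis for unions of facets of a simplex (the complexes $K(I)$ of the paper) rather than for arbitrary cones. For general cones it is false: inflating the apex $v$ of two triangles $vab$, $vcd$ into two copies gives $\{v_1,v_2\}\ast(ab\sqcup cd)\simeq S^1$.

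The problem is the claim that attaching $(n-1)$-cells to an $(n-2)$-dimensional wedge of $(n-2)$-spheres yields an $(n-2)$-connected space. That is false in general. You need the attaching spheres $\partial_d$ to kill $\pi_{n-2}(|B|)$, and nothing in your argument shows this. Your outline uses flabbiness only to pass it to restrictions. For $n=2$ it therefore applies verbatim to the inhabited but non-flabby diagram $D(a)=\{a_1,a_2\}$, $D(b)=\{b_1\}$, $D(\sigma)=\{e\}$, $e\mapsto(a_1,b_1)$. There $|B|$ is three points, a wedge of $0$-spheres, yet $|P_D|$ is an edge plus an isolated point.

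So this step is exactly where flabbiness must do real work. Surjectivity of $\ca{D}(X_{P^*})\to\ca{D}(X_{P^*}\setminus\{\sigma\})$ tells you that every ``section sphere'' in $|B|$ (a copy of $\ca{F}(\partial\Delta^{n-1})$ mapped isomorphically onto $P\setminus\{\sigma\}$ by $q_D$) equals some $\partial_d$. You would still have to prove that section spheres kill $\pi_{n-2}(|B|)$. That is a substantial statement in its own right, and the proposal does not address it.

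\textbf{Your fallbacks do not close the gap.}
\begin{itemize}
\item If the $\partial_d$ were null-homotopic, you would get $|B|\vee\bigvee S^{n-1}$. This is not a wedge of $(n-1)$-spheres, because $|B|$ is usually not contractible (already for $n=2$ with $\#D(a)\geqslant 2$).
\item The fiber-lemma route fails too. One has $q_D^{-1}(P_{\geqslant I})=\bigsqcup_{x\in D(I)}(P_D)_{\geqslant x}$, a disjoint union of cones. It is homotopy equivalent to the discrete set $D(I)$, not to an inflated link, so it cannot drive the connectivity count.
\end{itemize}

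\textbf{What the paper does instead.} It avoids attaching top cells altogether, using an inner induction on the complexity $c(D)=\#P_D$.
\begin{itemize}
\item It chooses a minimal face $I_0$ with $\#D(I_0)\geqslant 2$ and splits $D(I_0)=A_1\sqcup A_2$, giving subdiagrams $D_1,D_2$. Flabbiness, via the Gluing Lemma over the singleton stalks below $I_0$, is what keeps $D_1,D_2$ flabby.
\item It writes $|P_D|=|P_{D_1}|\cup|P_{D_2}|$. The intersection is the inflation over the Cohen--Macaulay complex $K(I_0)$, which is a wedge of $(n-2)$-spheres by the outer induction.
\item Lemma~\ref{lemInductionHomotopy} then finishes.
\end{itemize}
Some such device is what your proposal is missing: splitting the sheaf so that both pieces stay flabby with smaller complexity, rather than attaching all top cells at once.
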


To put it briefly, an inflation of the $(n-1)$-dimensional simplex along a flabby and inhabited sheaf is homotopy equivalent to a wedge of $(n-1)$-dimensional spheres. We denote the number of spheres in this wedge by $n(D)$, which is, certainly, a non-negative integer.

\begin{thm}\label{thmGenSimpPoset}
Let $P$ be an arbitrary connected geometric simplicial poset of dimension $n-1$  and  $D\colon \cat(P^*)\to \FinSets$ be a diagram on the dual poset such that the corresponding sheaf $\ca{D}$ is inhabited and flabby. Then there is a homotopy wedge decomposition
\begin{equation}\label{eqMainWedgeDecomposition}
|P_D|\simeq |P|\vee\bigvee\nolimits_{I\in P}(\Sigma^{\dim I+1}|\link_PI|)^{\vee n(D_I)},
\end{equation}
where $D_I$ denotes the restriction of the diagram $D$ to the subset $(P_{\leqslant I})^*$.
\end{thm}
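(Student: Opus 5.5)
We would prove Theorem~\ref{thmGenSimpPoset} with the poset fiber theorem of Bj\"{o}rner--Wachs--Welker, applied to the canonical monotone map $q_D\colon P_D\to P$ of Remark~\ref{remMapQd}. That theorem says the following. Let $f\colon Q\to P$ be a map of posets such that, for every $I\in P$, the fiber $|f^{-1}(P_{\leqslant I})|$ is $\ell(f^{-1}(P_{<I}))$-connected. Then
\[
|Q|\simeq |P|\vee\bigvee_{I\in P}|f^{-1}(P_{\leqslant I})| * |P_{>I}|.
\]
So the plan has two parts: identify the fibers, and check the connectivity hypothesis.

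\textbf{Identifying the fibers.} By construction, $q_D^{-1}(P_{\leqslant I})$ is exactly the inflation $(P_{\leqslant I})_{D_I}$. Here $P_{\leqslant I}\cong\ca{F}(\Delta^{\dim I})$, and $D_I$ is the restriction of $D$ to the subset $(P_{\leqslant I})^*$, which is open in $X_{P^*}$, being a lower set of $P$. By Remark~\ref{remFlabbyOnOpen}, the sheaf $\ca{D}_I$ is flabby, and it is clearly still inhabited. Theorem~\ref{thmSimplex} then gives
\[
|q_D^{-1}(P_{\leqslant I})|\simeq\bigvee^{n(D_I)}S^{\dim I}.
\]
Since $|P_{>I}|\cong|\link_PI|$, the join $S^{\dim I}*|\link_PI|$ equals $\Sigma^{\dim I+1}|\link_PI|$. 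A join with a wedge distributes as a wedge of joins (up to homotopy), and this produces exactly the summand $(\Sigma^{\dim I+1}|\link_PI|)^{\vee n(D_I)}$ in~\eqref{eqMainWedgeDecomposition}.

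\textbf{Checking the connectivity hypothesis.} Again $q_D^{-1}(P_{<I})$ is an inflation, this time of the boundary of a simplex, so its length is $\dim I-1$. The hypothesis therefore asks that $|q_D^{-1}(P_{\leqslant I})|$ be $(\dim I-1)$-connected. A wedge of $\dim I$-spheres satisfies this, including the case of an empty wedge, which is contractible (Remark~\ref{remZeroWedge}). The only genuine point here is nonemptiness of the fiber, which inhabitedness guarantees.

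\textbf{Main obstacle.} The hard part is Theorem~\ref{thmSimplex} itself, which we are allowed to assume. The remaining care is bookkeeping. First, $P$ and $P_D$ are geometric (no bottom element), so the fiber theorem must be applied in the version for maps $f\colon Q\to P$ with $|P_{>I}|$ in the join. Second, connectedness of $P$ is needed to match the base term $|P|$; a version of the fiber theorem that does not require $P$ connected would make that hypothesis harmless. Third, one must confirm that the wedge decomposition is indexed over all $I\in P$ with the right multiplicities. If a vertex $I$ has $n(D_I)$ copies, each contributing $\Sigma|\link I|$, this is consistent with the known vertex-inflation formula of Wachs, and we would use that formula as a sanity check.
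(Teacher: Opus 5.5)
Your proposal is correct and follows essentially the same route as the paper: it applies the poset fiber theorem to $q_D\colon P_D\to P$, identifies each fiber $q_D^{-1}(P_{\leqslant I})$ as the inflation of a simplex along the flabby, inhabited restriction $D_I$, invokes Theorem~\ref{thmSimplex} to get $(\dim I-1)$-connectedness, and rewrites each join $\bigl(\bigvee^{n(D_I)} S^{\dim I}\bigr)\ast|P_{>I}|$ as $(\Sigma^{\dim I+1}|\link_PI|)^{\vee n(D_I)}$. One harmless slip occurs in your sanity check: a vertex $I$ has $\#D(I)=n(D_I)+1$ copies, not $n(D_I)$, and it contributes $n(D_I)$ summands $\Sigma|\link_PI|$.
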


It is also useful to formulate a special (and fairly straightforward) corollary of the previous theorem as a separate result.

\begin{thm}\label{thmCM}
Let $P$ be a homotopically Cohen-Macaulay geometric simplicial poset of dimension $n-1$ and $D\colon \cat(P^*)\to \FinSets$ be a diagram such that the corresponding sheaf $\ca{D}$ is inhabited  and  flabby. Then $P_D$ is a homotopically Cohen-Macaulay simplicial poset of dimension $n-1$ as well.
\end{thm}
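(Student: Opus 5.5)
We deduce Theorem~\ref{thmCM} from Theorem~\ref{thmGenSimpPoset}. Each condition in the definition of a homotopically Cohen--Macaulay poset is checked for $P_D$: purity, the wedge-of-spheres property for $|P_D|$, and the same property for every link in $P_D$.

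\emph{Purity.} The map $q_D\colon P_D\to P$ of Remark~\ref{remMapQd} restricts, on each lower cone $(P_D)_{\leqslant d}$, to an isomorphism onto $P_{\leqslant q_D(d)}$, as in the proof that $P_D$ is a geometric simplicial poset. Hence $q_D$ preserves dimension. Since $\ca{D}$ is flabby, every restriction $D(J\geqslant I)$ is surjective: take $U_J\supseteq U_I$ in the Alexandrov topology on $P^*$. So any $d\in D(I)$ lies below some element of $D(J)$ for a maximal $J\geqslant I$, and maximal elements of $P_D$ lie over maximal elements of $P$. Thus $P_D$ is pure of dimension $n-1$. Inhabitedness guarantees that no $D(J)$ is empty, so $P_D$ is nonempty over every simplex.

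\emph{Global homotopy type.} First suppose $P$ is connected. Apply formula~\eqref{eqMainWedgeDecomposition}. Here $|P|$ is a wedge of $(n-1)$-spheres, and $|\link_PI|$ is a wedge of $(n-2-\dim I)$-spheres. Suspending $\dim I+1$ times gives a wedge of $(n-1)$-spheres, including when the wedge is empty (Remark~\ref{remZeroWedge}); if $\link_PI$ is empty, as for maximal $I$, its suspension is $S^{\dim I}$, still of dimension $n-1$ by purity. Hence $|P_D|$ is a wedge of $(n-1)$-spheres. If $P$ is disconnected, the CM condition forces $n-1=0$, so $P$ is a finite set of points and $P_D$ is a finite nonempty set, which is again Cohen--Macaulay of dimension $0$.

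\emph{Links.} This is the main step. Take $d\in D(I)\subset P_D$ and identify $\link_{P_D}d=(P_D)_{\geqslant d}$ with the inflation of $\link_PI=P_{\geqslant I}$ along a suitable diagram. For $J\geqslant I$, set $D'(J)=D(J\geqslant I)^{-1}(d)$, the fibre over $d$, with maps induced from $D$; this is compatible by functoriality. By the definition of the completion order, $(P_D)_{\geqslant d}$ is exactly the set $\bigsqcup_{J\geqslant I}D'(J)$ with the inherited order, that is, $(P_{\geqslant I})_{D'}$. Removing the minimal elements on both sides gives $(P_{>I})_{D'}$, which is the geometric link. We must then check that the sheaf $\ca{D}'$ on $(P_{>I})^*$ is inhabited and flabby.

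Sections of $\ca{D}'$ over an open set $U$ are sections of $\ca{D}$ over $U\cup\{I\}$ whose value at $I$ is $d$. Note that $U\cup\{I\}$ is open in $P^*$ restricted to $P_{\geqslant I}$, and hence corresponds to an open set of $P^*$ after adding the lower faces. Flabbiness of $\ca{D}$ lets us extend a section on $V\cup U_I$ to one on $U\cup U_I$, preserving the value $d$ at $I$. Applying this with $V=\varnothing$ gives inhabitedness. Some care is needed because the opens of $P^*$ are subcomplexes, and we must pass from $U\cup\{I\}$ to the subcomplex generated by it. For this, I would use the isomorphism of lower cones to see that the values below $I$ are determined by $d$.

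Since $\link_PI$ is homotopically CM of dimension $n-2-\dim I$, the preceding two steps applied to $D'$ show that $|\link_{P_D}d|$ is a wedge of $(n-2-\dim d)$-spheres. This completes the verification.
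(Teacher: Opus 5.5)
Your purity and global steps are correct and follow the paper's deduction from Theorem~\ref{thmGenSimpPoset}; you also treat purity and the disconnected case, which the paper skips. For links, the paper only says they can be handled as in~\cite{AyzRukh}. Your identification $(P_D)_{>d}=(P_{>I})_{D'}$ with the fibre diagram $D'(J)=D(J\geqslant I)^{-1}(d)$ is therefore a sound and more self-contained route.

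\textbf{The gap: flabbiness of $\ca{D}'$.} This is the step you yourself flagged. Since $(P_{\geqslant I})^*$ is closed rather than open in $X_{P^*}$, Remark~\ref{remFlabbyOnOpen} is unavailable. Your plan is to turn a section $s$ of $\ca{D}'$ on $V$ into a section of $\ca{D}$ on the subcomplex generated by $V\cup\{I\}$, and this fails at the first move.

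The problematic faces are not those below $I$; those are indeed fixed by $d$. They are faces $K$ of some $J\in V$ that are incomparable with $I$. The value at such $K$ must be $s_J|_K$, and two simplices $J_1,J_2\in V$ containing $K$ can disagree there. For simplicial complexes this is rescued because $J_1\cap J_2\supseteq I$ lies in $V\cup\{I\}$, but you do not say this. For simplicial posets, which is the generality of the theorem, the claim is false.

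Here is a counterexample. Take the digon: vertices $a,b$ and two edges $x,y$ both on $\{a,b\}$; it is homotopically Cohen--Macaulay of dimension $1$. Set $D(a)=\{d\}$ and $D(b)=D(x)=D(y)=\{1,2\}$, with the maps to $D(b)$ the identities. Then $\ca{D}$ is inhabited and flabby. However, $\ca{D}'(\{x,y\})=D'(x)\times D'(y)$ has four elements, while $\ca{D}(P)$ has only two. So the section $(1,2)$ of $\ca{D}'$ is the restriction of no section of $\ca{D}$ on any subcomplex containing $x$ and $y$. Adding an edge $ac$ gives a case where an extension from $\{x,y\}$ is actually required and your mechanism cannot produce it.

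\textbf{The repair: check flabbiness pointwise.} On a finite poset, a sheaf is flabby as soon as, for every point, the map from the stalk to the sections over the punctured minimal neighbourhood is surjective. To see this, adjoin the points of $U\setminus V$ one at a time. Each time, choose a point whose punctured minimal neighbourhood already lies in the current open set, lift, and glue; the overlap is exactly that punctured neighbourhood.

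For $\ca{D}'$ at $J>I$, this asks that every compatible family $(s_L)_{I\leqslant L<J}$ with $s_I=d$ be the restriction of an element of $D(J)$. This takes place entirely inside the Boolean lattice $P_{\leqslant J}$:
\begin{itemize}
\item Set $e_K=s_{K\vee I}|_K$, with the join taken in $P_{\leqslant J}$.
\item This gives a section of $\ca{D}$ on $W=\{K\leqslant J\mid K\vee I\neq J\}$, which is a lower ideal of $P$ and hence open in $X_{P^*}$.
\item Flabbiness of $\ca{D}$ makes $D(J)=\ca{D}(U_J)\to\ca{D}(W)$ surjective, which gives the required lift.
\end{itemize}
When $J$ covers $I$, this reduces to surjectivity of $D(J\geqslant I)$, which also yields inhabitedness. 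With $\ca{D}'$ inhabited and flabby for arbitrary simplicial posets, the rest of your argument goes through.
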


For a fixed dimension $n-1$, Theorem~\ref{thmGenSimpPoset} follows from Theorem~\ref{thmSimplex} by applying the poset fiber theorem of Bj\"{o}rner, Wachs  and  Welker~\cite{BjWW}. We present this theorem and the corresponding corollary in the next section. Theorem~\ref{thmCM} follows from Theorem~\ref{thmGenSimpPoset} by directly applying the definition of a homotopically Cohen-Macaulay simplicial poset.

The principal Theorem~\ref{thmSimplex} is proven by induction on $n$, however Theorem~\ref{thmCM} is applied in the step of induction. Overall, it seems methodologically justified to present all the theorems in a single section to keep better track of induction scheme. For convenience, we denote the statements of Theorem~\ref{thmSimplex}, \ref{thmGenSimpPoset} and \ref{thmCM} for a fixed $n$ by $\Th_1(n)$, $\Th_2(n)$  and  $\Th_3(n)$, correspondingly.

\section{Proofs}\label{secProofs}

\subsection{$\Th_1(\leqslant n)\Rightarrow\Th_2(n)$.}

This implication is based on the result of Bj\"{o}rner, Wachs and Welker, the poset fiber theorem~\cite{BjWW}. Let $f\colon S\to T$ be a morphism of posets. For each element $t\in T$ the subset $f^{-1}(T_{\leqslant t})$ (or its geometric realization $|f^{-1}(T_{\leqslant t})|$) is called the fiber of the map. Recall that a topological space $X$ is called $r$-connected if $\pi_i(X)=1$ for all $i\leqslant r$.

\begin{thm}[{The Poset Fiber Theorem; \cite[Thm.1.1]{BjWW}}]\label{thmFiberThm}
Let $f\colon S\to T$ be a morphism of posets such that, for each element $t\in T$, the fiber $|f^{-1}(T_{\leqslant t})|$ is $\dim |f^{-1}(T_{<t})|$-connected topological space. If $|T|$ is connected, then there is a homotopy equivalence
\begin{equation}\label{eqWedgeGeneral}
|S|\simeq |T|\vee\bigvee\nolimits_{t\in T} |f^{-1}(T_{\leqslant t})|\ast |T_{>t}|.
\end{equation}
If $|T|$ is not connected, then the formula applies to all connected components of $|T|$ separately.
\end{thm}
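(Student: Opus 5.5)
The plan is to write $|S|$ as a homotopy colimit over $T$ of the fibers, replace that diagram by one whose maps are constant, and then apply the wedge lemma of Ziegler--\v{Z}ivaljevi\'c as presented in~\cite{WZZ}.

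\textbf{Step 1 (fibers as a diagram).} Put $F_t=f^{-1}(T_{\leqslant t})$ and $F_{<t}=f^{-1}(T_{<t})$. The assignment $t\mapsto |F_t|$ is a $\Top$-valued diagram on $T$, with inclusions $|F_s|\subseteq |F_t|$ for $s\leqslant t$. I would prove $|S|\simeq \hocolim_T |F_\bullet|$ by a Quillen-type projection lemma. Concretely, $\hocolim_T|F_\bullet|$ is the realization of the poset of pairs $(x,t)$ with $f(x)\leqslant t$, ordered by $(x,t)\leqslant(x',t')$ iff $x\leqslant x'$ and $t\leqslant t'$; orient the order on $T$ so that this matches the paper's convention for $\hocolim$, i.e.\ with the cones $|T_{\geqslant t}|$ there. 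The projection $(x,t)\mapsto x$ has contractible fibers $\{t\mid t\geqslant f(x)\}$, each a cone with apex $f(x)$. Quillen's fiber lemma therefore gives the equivalence with $|S|$.

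\textbf{Step 2 (the maps are null-homotopic).} For $s<t$, the inclusion $|F_s|\to|F_t|$ factors through $|F_{<t}|$. By hypothesis $|F_t|$ is $\dim|F_{<t}|$-connected. Cellular approximation then shows that any map from a CW complex of dimension $\leqslant\dim|F_{<t}|$ into $|F_t|$ is null-homotopic. In particular the inclusion $|F_{<t}|\hookrightarrow|F_t|$ is null-homotopic, and so is every structure map $|F_s|\to|F_t|$.

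\textbf{Step 3 (rigidifying to constant maps).} This is the step I expect to be the main obstacle: the wedge lemma needs compatible \emph{constant} maps $c_{st}$ with $c_{st}\circ(\text{anything})=c_{rt}$, not merely null-homotopic ones. I would build a homotopy-equivalent diagram by induction over a linear extension of $T$. At each $t$, choose a point $p_t\in|F_t|$ and a null-homotopy of the whole inclusion $|F_{<t}|\to|F_t|$ to $p_t$; this exists by Step 2. Then replace $|F_t|$ by the mapping cylinder, or equivalently use the homotopy to redefine the structure maps so that they all become the constant map at $p_t$. Because all maps into $F_t$ factor through the single space $F_{<t}$, one null-homotopy handles them all simultaneously. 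Homotopy invariance of homotopy colimits under objectwise equivalences and homotopic structure maps (for diagrams over finite posets, \cite{WZZ}) shows that $\hocolim$ is unchanged.

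\textbf{Step 4 (wedge lemma).} The wedge lemma states that for a diagram with compatible constant maps, the homotopy colimit is $|T|\vee\bigvee_{t}D(t)\ast|T_{>t}|$, when $|T|$ is connected. Here $|T|$ arises as the hocolim of the diagram of chosen points, and each $D(t)$ is coned off over the part of $T$ to which it maps constantly, which gives the join with $|T_{>t}|$. Substituting $D(t)=|F_t|=|f^{-1}(T_{\leqslant t})|$ yields~\eqref{eqWedgeGeneral}. If $|T|$ is disconnected, $S$ splits as a disjoint union over the preimages of the components, since there are no comparabilities between them. So the formula applies componentwise.
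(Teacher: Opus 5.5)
The paper gives no proof of this statement; it is quoted from \cite{BjWW}. Your outline follows the homotopy-colimit strategy of that source: the fibers as a homotopy colimit, null-homotopic inclusions $|F_{<t}|\hookrightarrow|F_t|$ from the dimension/connectivity hypothesis, passage to constant maps, then the wedge lemma. Steps 2 and 4 and the reduction to components are fine.

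Step 1 needs a small correction. Quillen's fiber lemma concerns $\pi^{-1}(S_{\geqslant x})$, not the strict fiber $\pi^{-1}(x)$. This is harmless: $(y,t)\mapsto(x,t)$ is a monotone map below the identity, and it retracts $\pi^{-1}(S_{\geqslant x})$ onto the cone $\{x\}\times T_{\geqslant f(x)}$, in either orientation of the $T$-coordinate. Also, with the componentwise order, $|G|$ is only homotopy equivalent to $\hocolim_T|F_\bullet|$, not equal to it.

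The genuine gap is Step 3. Homotopy colimits are \emph{not} invariant under replacing structure maps by homotopic ones. The homotopy lemma needs a strict natural transformation that is an objectwise homotopy equivalence. A diagram that only commutes up to homotopy does not determine its homotopy colimit, because the coherence of the homotopies matters. Moreover, redefining the maps into a single $t$ as constants does not by itself give a functor: for $s<t<u$ with $D(t\leqslant u)$ still an inclusion, $D(t\leqslant u)\circ c_t$ is constant while $D(s\leqslant u)$ is not.

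What saves the argument is your own observation that everything mapping into $|F_t|$ factors through $|F_{<t}|$. This lets you build a strict zigzag for all $t$ at once, with no induction. Write $D$ for the diagram $t\mapsto|F_t|$ and $j_t\colon|F_{<t}|\hookrightarrow|F_t|$ for the inclusion. Let $H_t$ be a null-homotopy with $H_t(\cdot,0)=j_t$ and $H_t(\cdot,1)\equiv p_t$.
\begin{itemize}
\item Let $M_t=|F_t|\cup_{j_t}(|F_{<t}|\times[0,1])$ be the mapping cylinder, glued at $0$. Let $\rho_t\colon M_t\to|F_t|$ be the collapse and $\iota_t(x)=(x,1)$ the inclusion of the free end.
\item Put $\tilde D(t)=M_t$ and $\tilde D(s\leqslant t)=\iota_t\circ\rho_s$ for $s<t$; this makes sense since $|F_s|\subseteq|F_{<t}|$. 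It is a functor because $\rho_s\circ\iota_s=j_s$.
\item The collapses form a natural transformation $\rho\colon\tilde D\to D$, because $\rho_t\circ\iota_t=j_t$.
\item Define $\psi_t\colon M_t\to|F_t|$ as the identity on $|F_t|$ and $H_t$ on the cylinder; it is well defined since $H_t(\cdot,0)=j_t$. This gives a natural transformation $\psi\colon\tilde D\to E$ to the constant-map diagram, because $\psi_t\circ\iota_t\equiv p_t$.
\end{itemize}
Each $\rho_t$ and $\psi_t$ is a left inverse of the homotopy equivalence $|F_t|\hookrightarrow M_t$, hence a homotopy equivalence. The strict homotopy lemma then gives $\hocolim_T D\simeq\hocolim_T\tilde D\simeq\hocolim_T E$, and your Step 4 applies.
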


We use the poset fiber theorem to prove the implication $\Th_1(\leqslant n)\Rightarrow\Th_2(n)$.

\begin{proof}
Recall that $P$ is an arbitrary geometric simplicial poset of dimension $n-1$ in Theorem~\ref{thmGenSimpPoset} and $D\colon \cat(P^*)\to \FinSets$ is a diagram on the dual poset. The corresponding sheaf $\ca{D}$ is inhabited and flabby. Consider the natural projection map $q_D\colon P_D\to P$, as introduced in Remark~\ref{remMapQd}. We apply the poset fiber theorem (Theorem~\ref{thmFiberThm}) to this map of posets.

At first, we check that the assumptions of the Poset Fiber Theorem are satisfied. Since $P$ is a geometric simplicial poset, for each $I\in P$ the subset $P_{\leqslant I}$ is isomorphic to the poset $\ca{F}(\Delta^{k})$ of all non-empty faces of a simplex (that is a Boolean lattice without the least element),  and  $k\leqslant n-1$. Moreover, $P_{\leqslant I}$ is an exactly embedded subset in $P$. We note that the fiber $(q_D)^{-1}(P_{\leqslant I})$ coincides, by definition, with the inflation of the simplex $P_{\leqslant I}$ along the restriction $D|_{(P_{\leqslant I})^*}$ of the diagram $D$ to the subset $(P_{\leqslant I})^*\subset P^*$.

In Alexandrov topology corresponding to $P^*$, a subset $(P_{\leqslant I})^*=(P^*)_{\geqslant I}$ is always open (because it is an upper order ideal). Therefore, the restriction $\ca{D}_I=\ca{D}|_{P_{\leqslant I}}$ of the corresponding flabby sheaf $\ca{D}$ to this subset is flabby as well, see Remark~\ref{remFlabbyOnOpen}. Overall, all conditions of Theorem~\ref{thmSimplex} hold: there is a flabby sheaf on the (opposite) category of faces of the simplex of dimension $k\leqslant n-1$.

Since we assumed $\Th_1(\leqslant n)$ holds true, it follows from Theorem~\ref{thmSimplex} that the fiber $|q_D^{-1}(P_{\leqslant I})|$ is a $k$-dimensional space, homotopy equivalent to a wedge of $n(D_I)$ many $k$-dimensional spheres. Therefore, the fiber $|(q_D)^{-1}(P_{\leqslant I})|$ is a $(k-1)$-connected space. The principal assumption of the Poset Fiber Theorem holds true. Thus, by applying the Poset Fiber Theorem to the map $q_D\colon P_D\to P$, we get the homotopy equivalence
\begin{equation}
|P_D|\simeq |P|\vee\bigvee\nolimits_{I\in P} |q_D^{-1}(P_{\leqslant I})|\ast |P_{>I}|.
\end{equation}
Note that $P_{>I}$ is, by definition, a link of the $k$-dimensional simplex $I$ in the simplicial poset $P$, and that the space $|f^{-1}(P_{\leqslant I})|$, as we already have proven, is homotopy equivalent to the wedge $\bigvee_{n(D_I)}S^k$. Therefore,
\[
|f^{-1}(P_{\leqslant I})|\ast |P_{>I}|\simeq (\Sigma^{k+1}|\link_PI|)^{\vee n(D_I)}
\]
From this observation we conclude that the statement of $\Th_2(n)$ holds true.
\end{proof}

\subsection{$\Th_2(n)\Rightarrow\Th_3(n)$.}

This implication follows almost straightforwardly from the definition of a homotopically Cohen-Macaulay simplicial poset.

\begin{proof}
Let $P$ be a geometric simplicial homotopically Cohen-Macaulay poset of dimension $n-1$. Then $|P|$ is homotopy equivalent to a wedge of $(n-1)$-dimensional spheres,  and  for each (non-empty) simplex $I$ of dimension $k$ there is a homotopy equivalence $|\link_PI|\simeq \bigvee S^{n-2-k}$. We suppose the statement $\Th_2(n)$ holds, from which follows the homotopy equivalence
\begin{multline}
  |P_D|\simeq |P|\vee\bigvee\nolimits_{I\in P}(\Sigma^{\dim I+1}|\link_PI|)^{\vee n(D_I)}\simeq \\
  \simeq \left(\bigvee S^{n-1}\right)\vee\bigvee\nolimits_{I\in P}(\Sigma^{\dim I+1}\left(\bigvee S^{n-2-\dim I}\right)^{\vee n(D_I)})\simeq \bigvee S^{n-1}.
\end{multline}
So far, we proved that an inflation of a homotopically Cohen-Macaulay geometric simplicial poset is homotopy equivalent to a wedge of spheres. Now we need to check that all links of simplices of this inflation are as well homotopy equivalent to wedges of spheres. This statement is quite technical and can be proved similar to~\cite[Stm.~5.8]{AyzRukh}.
\end{proof}

\subsection{Proof of Theorem~\ref{thmSimplex}.}

The main Theorem~\ref{thmSimplex} is proved by induction on $n\geqslant 1$. This is where the property of flabbyness is finally been used.

\begin{rem}\label{remOpenSubcomplexes}
Let, as in Theorem~\ref{thmSimplex}, $P=\ca{F}(\Delta^{n-1})$ be a poset of non-empty faces of a simplex  and  $D$ be a diagram on the poset $P^*$. The corresponding sheaf is given on the topological space $X_{P^*}$. Open sets in this topology are upper order ideals in $P^*$ or, equivalently, lower order ideals in $P$. A curious reader can notice that lower order ideals in the poset of faces of a simplex on a set of vertices $[n]$ are simplicial complexes on the same set $[n]$. An interesting interpretation emerges: open sets are simplicial subcomplexes. For a simplex $I\in P$ its minimal open neighbourhood $U_I$ is a simplicial subcomplex generated by this simplex, that is the one that contains $I$ and  all its faces.

Construction of a sheaf $\ca{D}$ on the topology $X_{P^*}$ is equivalent to assigning a finite set $\ca{D}(U)$ to each simplicial subcomplex $U$ in $\Delta^{n-1}$  and  a restriction morphism $\ca{D}(U\supset V)\colon \ca{D}(U)\to \ca{D}(V)$ for each pair of nested subcomplexes $U\supset V$.
\end{rem}

Let us prove Theorem~\ref{thmSimplex}.

\subsection{$\Th_1(1)$.}

The base of induction is $n=1$. In this case, the simplex has dimension $n-1=0$, so the simplex is a point. A sheaf on a singleton $\{v\}$ contains only one finite set,  and  therefore we have $|P_D|=D(v)\simeq \bigvee_{|D(v)|-1}S^0$. Any finite set is homotopy equivalent (and even homeomorphic) to a wedge of $0$-dimensional spheres.

\subsection{$\Th_1(\leqslant n-1)\Rightarrow\Th_1(n)$.}

Let us prove the step of induction $\Th_1(\leqslant n-1)\Rightarrow \Th_1(n)$ for $n\geqslant 2$. As the sheaf $\ca{D}$ is inhabited by assumption, we have $D(I)\neq\varnothing$ for each non-empty face $I$ of the simplex, that is for each element of the poset $P=\ca{F}(\Delta^{n-1})$. Let us introduce a numerical characteristic of a sheaf (or the corresponding diagram):
\[
c(D)=\sum_{I\in P, I\neq \varnothing}\#D(I)=\#P_D,
\]
which we call \emph{the complexity of the diagram} $D$. From the condition of inhabitedness it follows that $c(D)\geqslant 2^n-1$, whereby equality is attained only if $\#D(I)=1$ for each face $I$ of the simplex.

We will use the inner induction on the complexity $c(D)$ of the diagram. The base of induction, that is the case $c(D)=2^n-1$, can be checked easily. In this case all stalks of a sheaf are singletons,  and  an inflation $P_D$ is naturally isomorphic to the initial poset $P$. Therefore $|P_D|\cong |P|\cong \Delta^{n-1}$ is a contractible space, hence considered a wedge of $(n-1)$-dimensional spheres, see Remark~\ref{remZeroWedge}.

Assume that $c=c(D)>2^n-1$ and the theorem is proven for all inhabited and flabby sheaves over the simplex $P=\ca{F}(\Delta^{n-1})$ of any complexity less than $c$. We consider a set $M\subseteq P$, that contains all simplices $I$ such that $\#D(I)>1$. Since $c>2^n-1$, the set $M$ is non-empty. We consider any minimal by inclusion element $I_0$ in $M$, that is a non-empty simplex $I_0$ with two properties: (1) $\#D(I_0)\geqslant 2$, (2) for any $J\subsetneq I_0$,  $\#D(J)=1$.

Let us represent the set $D(I_0)$ as a disjoint union of two non-empty subsets
\[
D(I_0)=A_1\sqcup A_2, \quad A_1,A_2\neq\varnothing.
\]
Since $\#D(I_0)\geqslant 2$ such representation exists. Now we consider a partition of the diagram $D$ into the union of two diagrams-skyscrapers on the sets $A_1$  and  $A_2$. More formally, we define two diagrams $D_1,D_2$ (subdiagrams in $D$) on $P$ by all their values:
\begin{equation}\label{eqSubDiag}
D_\varepsilon(J)=\begin{cases}
                   D(J\geqslant I_0)^{-1}(A_\varepsilon), & \mbox{if } J\geqslant I_0 \\
                   D(J), & \mbox{otherwise}.
                 \end{cases}
\end{equation}
for $\varepsilon=1$ and $2$. We recall that $D$ is a diagram on the dual poset $P^*$, and therefore all its morphisms map from bigger simplices to smaller ones. In simpler terms, we constructed a diagram $D_\varepsilon$ from those and only those elements of stalks of $D$ that can be restricted to~$A_\varepsilon$. We can think of it as an inclusion $D_1,D_2\subset D$ in the sense that there are natural transformations of functors $D_1,D_2\to D$, monomorphic on each stalk. In particular, from this observation it follows the existence of natural inclusions of inflated posets $P_{D_1}\subset P_D$ and $P_{D_2}\subset P_D$.

It can be seen that the complexity of two diagrams, $D_1$ and $D_2$, is strictly less than that of~$D$:
\[
c(D_1)<c(D),\quad c(D_2)<c(D),
\]
Now, let us formulate several auxiliary statements, that carry out the step of inner induction.

\begin{lem}\label{lemInhFlabby}
The sheaves $\ca{D}_1$ and $\ca{D}_2$, that correspond to diagrams $D_1$ and $D_2$, are inhabited and flabby.
\end{lem}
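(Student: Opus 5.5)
The plan is to describe the sections of $\ca{D}_\varepsilon$ explicitly through formula~\eqref{eqInvLimit}, and then reduce both inhabitedness and flabbiness of $\ca{D}_\varepsilon$ to the corresponding properties of $\ca{D}$. The one extra ingredient is the Gluing Lemma (Lemma~\ref{lemGlue}), which becomes available because $I_0$ was chosen minimal.

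\textbf{Step 1: $D_\varepsilon$ is a subdiagram of $D$.} Take $J\supseteq K$ in $P$, so that $D(J\geqslant K)\colon D(J)\to D(K)$. If $K\geqslant I_0$, then $J\geqslant I_0$. For $x\in D_\varepsilon(J)$, functoriality gives
\[
D(K\geqslant I_0)(D(J\geqslant K)x)=D(J\geqslant I_0)x\in A_\varepsilon,
\]
so $D(J\geqslant K)x\in D_\varepsilon(K)$. If $K\not\geqslant I_0$, then $D_\varepsilon(K)=D(K)$ and there is nothing to check.

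\textbf{Step 2: sections of $\ca{D}_\varepsilon$.} By Remark~\ref{remOpenSubcomplexes}, open sets $U$ are simplicial subcomplexes of $\Delta^{n-1}$.
\begin{itemize}
\item If $I_0\notin U$, then no $J\in U$ satisfies $J\geqslant I_0$, so $\ca{D}_\varepsilon(U)=\ca{D}(U)$.
\item If $I_0\in U$, then $\ca{D}_\varepsilon(U)=\{v\in\ca{D}(U)\mid v_{I_0}\in A_\varepsilon\}$. Indeed, for a compatible family $v$ and any $J\in U$ with $J\geqslant I_0$, the component $v_J$ restricts to $v_{I_0}$. Hence the single condition $v_{I_0}\in A_\varepsilon$ forces $v_J\in D_\varepsilon(J)$ for all such $J$.
\end{itemize}

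\textbf{Step 3: inhabitedness.} If $I_0\notin U$, then $\ca{D}_\varepsilon(U)=\ca{D}(U)\neq\varnothing$ because $\ca{D}$ is inhabited. If $I_0\in U$, pick $a\in A_\varepsilon\subseteq D(I_0)=\ca{D}(U_{I_0})$. Since $U_{I_0}\subseteq U$, flabbiness of $\ca{D}$ extends $a$ to some $v\in\ca{D}(U)$. This $v$ has $v_{I_0}=a\in A_\varepsilon$, so $v\in\ca{D}_\varepsilon(U)$.

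\textbf{Step 4: flabbiness.} Take subcomplexes $V\subseteq U$ and a section $w\in\ca{D}_\varepsilon(V)$. We need some $v\in\ca{D}_\varepsilon(U)$ with $v|_V=w$. There are three cases.
\begin{itemize}
\item $I_0\notin U$. Here $\ca{D}_\varepsilon$ coincides with $\ca{D}$ on both $U$ and $V$, so flabbiness of $\ca{D}$ gives the extension directly.
\item $I_0\in V$. Extend $w$ to any $v\in\ca{D}(U)$ using flabbiness of $\ca{D}$. Then $v_{I_0}=w_{I_0}\in A_\varepsilon$, so $v\in\ca{D}_\varepsilon(U)$ by Step~2.
\item $I_0\in U\setminus V$. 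This is the main case, because an arbitrary extension of $w$ may have its $I_0$-component outside $A_\varepsilon$.
\end{itemize}

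In the last case we first control the value at $I_0$ and only then extend.
\begin{itemize}
\item Choose $a\in A_\varepsilon$, regarded as a section on $U_{I_0}$.
\item Since $I_0\notin V$ and $V$ is a subcomplex, the overlap $V\cap U_{I_0}$ consists only of proper faces of $I_0$.
\item By minimality of $I_0$ in $M$, every such face has a singleton stalk.
\item Lemma~\ref{lemGlue} therefore produces a section $u\in\ca{D}(V\cup U_{I_0})$ with $u|_V=w$ and $u|_{U_{I_0}}=a$. Note that $V\cup U_{I_0}$ is open, being a union of subcomplexes, and is contained in $U$.
\item Flabbiness of $\ca{D}$ extends $u$ to some $v\in\ca{D}(U)$.
\end{itemize}
This $v$ satisfies $v_{I_0}=a\in A_\varepsilon$, so $v\in\ca{D}_\varepsilon(U)$, and $v|_V=w$ as required.
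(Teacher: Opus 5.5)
Your proof is correct and follows the paper's argument: the same three-case split for flabbiness, with the key case $I_0\in U\setminus V$ handled by gluing a chosen $a\in A_\varepsilon$ on $U_{I_0}$ to the given section on $V$ via Lemma~\ref{lemGlue} (using minimality of $I_0$), then extending by flabbiness of $\ca{D}$. The differences are cosmetic. You explicitly check that $D_\varepsilon$ is a subdiagram and describe its sections. You also prove inhabitedness by extending an element of $A_\varepsilon$ to each open $U$, whereas the paper notes that each stalk $D(J\geqslant I_0)^{-1}(A_\varepsilon)$ is nonempty because $D(J\geqslant I_0)$ is surjective; your version matches the definition of inhabited a bit more directly.
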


\begin{proof}
From the inhabitedness of $D$, it follows that all sets $D(J)$, for $J\in P$, are non-empty. The sets $A_1$, $A_2$ are non-empty by construction. All morphisms $D(J\geqslant I_0)$ are surjective, because they are special cases of the restriction morphisms in the sheaf $\ca{D}$, which is flabby. The inhabitedness of diagrams $D_1$, $D_2$ follows.

Let us prove the flabbiness of a sheaf $\ca{D}_\varepsilon$, $\varepsilon=1,2$. Let $U\supset V$ be two open sets in the Alexandrov topology $X_{P^*}$, i.e, according to Remark~\ref{remOpenSubcomplexes}, two simplicial subcomplexes. We need to prove that the map $\ca{D}_\varepsilon(U\supset V)\colon \ca{D}_\varepsilon(U)\to \ca{D}_\varepsilon(V)$ is surjective. Let us consider three cases.
\begin{enumerate}
  \item $I_0\notin U$. In this case the morphism $\ca{D}_\varepsilon(U\supset V)\colon \ca{D}_\varepsilon(U)\to \ca{D}_\varepsilon(V)$ literally coincides with the restriction morphism $\ca{D}(U\supset V)\colon \ca{D}(U)\to \ca{D}(V)$ of the original sheaf which is surjective by assumption.
  \item $I_0\in U$, $I_0\notin V$. Let $u\in \ca{D}_\varepsilon(V)=\ca{D}(V)$ be an arbitrary section on $V$. We consider an arbitrary section $v\in A_\varepsilon\subset D(I_0)=\ca{D}(U_{I_0})$. We recall that $U_{I_0}$ is a minimal open neighbourhood of $I_0$ in the Alexandrov topology and, at the same time, a simplicial subcomplex, generated by the simplex $I_0$, see Remark~\ref{remOpenSubcomplexes}. The choice of $I_0$ implies that, for each proper face $J<I_0$, the set $D(J)$ is a singleton. Therefore, the conditions of the Gluing Lemma~\ref{lemGlue} are satisfied for $U_{I_0}$ and $V$, which means there exists a section $u'\in \ca{D}(V\cup U_{I_0})$ on a subcomplex, generated by gluing a simplex $I_0$ to $V$. From the flabbiness of $\ca{D}$, it follows that there is a section $u''\in \ca{D}(U)$ extending $u'$ to the subcomplex $U$. We claim that $u''$ belongs to the subset of sections $\ca{D}_\varepsilon(U)\subset \ca{D}(U)$. This is practically a tautology: from the construction it follows that the restriction of $u''$ on the subcomplex $U_{I_0}$ is $v\in A_\varepsilon$, which actually means that the section $u''$ belongs to the specified sheaf $\ca{D}_\varepsilon$. So far, we proved that for any element $u\in \ca{D}_\varepsilon(V)$ there exists a covering element of it, namely $u''\in\ca{D}_\varepsilon(U)$, and therefore the map $\ca{D}_\varepsilon(U\supset V)$ is surjective.
  \item $I_0\in V\subset U$. This case can be proved analogously to the previous one, but the gluing procedure of $I_0$ to $V$ is unnecessary in this case.
\end{enumerate}
\end{proof}

\begin{lem}\label{lemUnion}
$P_D=P_{D_1}\cup P_{D_2}$.
\end{lem}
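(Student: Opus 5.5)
The statement is that every element of $P_D$ lies in $P_{D_1}$ or in $P_{D_2}$. Here the inclusions $P_{D_\varepsilon}\subset P_D$ are the natural ones induced by the stalkwise monomorphisms $D_\varepsilon\to D$. Since $P_D=\bigsqcup_{J\in P}D(J)$ as a set, and each $P_{D_\varepsilon}$ is the subset $\bigsqcup_J D_\varepsilon(J)$ with the induced order, it is enough to prove that
\[
D(J)=D_1(J)\cup D_2(J)\qquad\text{for every } J\in P.
\]
The order relations then agree automatically. In both $P_D$ and $P_{D_\varepsilon}$ the relation $d'\leqslant d$ (for $d\in D(J)$, $d'\in D(J')$, $J'\leqslant J$) means $D(J\geqslant J')(d)=d'$. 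So each $P_{D_\varepsilon}$ is an exactly embedded subposet, and the equality of underlying sets is the whole content.

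I would split into two cases according to definition~\eqref{eqSubDiag}.

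If $J\not\geqslant I_0$, then $D_1(J)=D_2(J)=D(J)$ and there is nothing to prove.

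If $J\geqslant I_0$, take $d\in D(J)$ and let $a=D(J\geqslant I_0)(d)\in D(I_0)$. Since $D(I_0)=A_1\sqcup A_2$, the element $a$ lies in exactly one $A_\varepsilon$, and then $d\in D(J\geqslant I_0)^{-1}(A_\varepsilon)=D_\varepsilon(J)$.

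This also shows that over $P_{\geqslant I_0}$ the union is disjoint, while elsewhere the two subposets coincide. That gives
\[
P_{D_1}\cap P_{D_2}=\bigsqcup_{J\not\geqslant I_0}D(J),
\]
which is presumably what the next step of the argument (a gluing or homotopy pushout along the intersection) will need, so I would record it alongside the lemma.

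There is no real obstacle. The only point needing care is to check that $D_\varepsilon$ is a subdiagram, i.e.\ closed under the structure maps. For $J\geqslant J'$, the map $D(J\geqslant J')$ sends $D_\varepsilon(J)$ into $D_\varepsilon(J')$:
\begin{itemize}
\item if $J'\geqslant I_0$, this holds by functoriality, since $D(J\geqslant I_0)=D(J\geqslant J');D(J'\geqslant I_0)$;
\item if $J'\not\geqslant I_0$, it is automatic, because $D_\varepsilon(J')=D(J')$.
\end{itemize}
This closure is exactly what makes $P_{D_\varepsilon}$ a well-defined subposet of $P_D$.
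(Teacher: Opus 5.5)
Your proposal is correct and matches the paper's proof. For $J\geqslant I_0$, each element of $D(J)$ restricts into exactly one of $A_1$, $A_2$; for $J\ngeqslant I_0$, the stalks of $D_1$, $D_2$, and $D$ coincide. Your extra checks make explicit what the paper leaves implicit: that $D_\varepsilon$ is closed under the structure maps (so $P_{D_\varepsilon}$ is an exactly embedded subposet), and the set-level description of $P_{D_1}\cap P_{D_2}$.
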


\begin{proof}
This statement is fairly straightforward since any element of a stalk of $D$, restricted to the simplex $I_0$, belongs either to $A_1$ or to $A_2$. The elements of stalks over the simplices that do not contain $I_0$, belong to both inflations.
\end{proof}

We denote by $D_{12}$ the intersection of ``subdiagrams'' $D_1$ and $D_2$ inside the bigger diagram $D$. We note that if $J\geqslant I_0$, then $D_{12}(J)=D_1(J)\cap D_2(J)=\varnothing$, because the corresponding preimages do not intersect (see the formula~\eqref{eqSubDiag}). On the other hand, for any $J$ such that $J\ngeqslant I$, both sets, $D_1(J)$ and $D_2(J)$, coincide with $D(J)$, and therefore coincide with their intersection. The following construction a general type of simplicial complexes which can arise as the support of $D_{12}$.

\begin{con}\label{conMainSubcomplex}
Let $I\in P=\ca{F}(\Delta^{n-1})$ be a face of the simplex on the vertex set $[n]$. Let us consider its simplicial subcomplex $K(I)$ that is a subset of the poset $P$ of the form
\[
K(I)=\{J\in P\mid J\ngeqslant I\}.
\]
There is a number of alternative ways to define this simplicial subcomplex. For example, $K(I)$ is a simplicial complex such that $I$ is its unique minimal non-simplex. Alternatively, $K(I)$ is the union of hyperfaces $\ca{F}_i=[n]\setminus \{i\}$ over all $i\in I$. Yet another description: $K(I)$ is the simplicial join of the boundary of the simplex $\dd U_I$ with the simplex $U_{[n]\setminus I}$. We recall that $U_J$ denotes the subcomplex generated by the simplex $J$, and that is consistent with the general terminology of minimal open neighbourhood (see Remark~\ref{remMinNbhdIsACone} and Remark~\ref{remOpenSubcomplexes}).
\end{con}

Let us describe the topological properties of $K(I)$.

\begin{lem}\label{lemCMofMain}
Let $P=\ca{F}(\Delta^{n-1})$ be the poset of faces of the $(n-1)$-dimensional simplex and $I\in P$ be its non-empty face. Then the simplicial complex $K(I)$ is homotopically Cohen–Macaulay of dimension $n-2$.
\end{lem}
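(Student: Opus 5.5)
We use the join description from Construction~\ref{conMainSubcomplex}: with $I\subseteq[n]$ of cardinality $k=\dim I+1\geqslant 1$,
\[
K(I)=\dd U_I\ast U_{[n]\setminus I}.
\]
Here $\dd U_I$ is the boundary of a $(k-1)$-simplex, i.e.\ a sphere $S^{k-2}$ (for $k=1$ it is the empty complex $S^{-1}$). The second factor $U_{[n]\setminus I}$ is a full simplex on $n-k$ vertices (empty if $I=[n]$). The plan is to verify the three conditions of the definition of homotopically Cohen--Macaulay complexes directly, and to split into two cases.

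\textbf{Case $I=[n]$.} Then $K(I)=\dd\Delta^{n-1}\cong S^{n-2}$, the boundary of a simplex. It is pure of dimension $n-2$. Its realization is one sphere $S^{n-2}$. The link of a face $J$ is the boundary of a simplex on $[n]\setminus J$, which is $S^{n-2-\dim J}$. All three conditions hold.

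\textbf{Case $I\neq[n]$.} Then $K(I)$ is a join with a nonempty simplex, i.e.\ a cone, hence contractible. This is a wedge of zero spheres by Remark~\ref{remZeroWedge}. The complex is pure of dimension $n-2$: its facets are exactly the hyperfaces $[n]\setminus\{i\}$ for $i\in I$, since any face $J\not\supseteq I$ misses some $i\in I$ and so lies in $[n]\setminus\{i\}$.

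For the links, take a nonempty face $J\in K(I)$ and write $J=J_1\sqcup J_2$ with $J_1=J\cap I\subsetneq I$ and $J_2=J\setminus I$. Links in joins split as joins of links, so
\[
\link_{K(I)}J=\link_{\dd U_I}J_1\ast\link_{U_{[n]\setminus I}}J_2 .
\]
We identify the two factors:
\begin{itemize}
\item[(a)] $\link_{\dd U_I}J_1$ is the boundary of the simplex on $I\setminus J_1$, a sphere $S^{k-|J_1|-2}$. When $J_1=\varnothing$ this factor is the whole $\dd U_I$.
\item[(b)] $\link_{U_{[n]\setminus I}}J_2$ is the full simplex on $([n]\setminus I)\setminus J_2$. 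It is contractible if that set is nonempty, and empty otherwise.
\end{itemize}
If the simplex in (b) is nonempty, the join is a cone, hence contractible, hence a wedge of spheres. If it is empty, the link equals the sphere $S^{k-|J_1|-2}$. In that case $J_2=[n]\setminus I$, so $|J|=|J_1|+n-k$, and the sphere dimension is $k-|J_1|-2=n-|J|-2=n-2-\dim J$, as required.

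The main point needing care is bookkeeping the degenerate situations: the empty complex as $S^{-1}$, and the case where the link is the empty simplex, which happens only for facets, where the required dimension is $n-2-(n-2)-1=-1$, consistent. We also need the standard fact that links in joins are joins of links, which can be stated explicitly in the simplicial complex setting.
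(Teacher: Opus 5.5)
Your argument is correct and starts from the same decomposition $K(I)=\dd U_I\ast U_{[n]\setminus I}$ as the paper. The difference is in how the Cohen--Macaulay property is extracted from it.

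The paper reduces everything to three general facts, left as exercises: a simplex is homotopically Cohen--Macaulay, so is the boundary of a simplex, and the class is closed under joins. You instead check the definition directly, using $\link_{K_1\ast K_2}(J_1\sqcup J_2)=\link_{K_1}J_1\ast\link_{K_2}J_2$ and the fact that a join with a nonempty simplex is a cone.

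Your route is self-contained and more elementary. Because one join factor is a full simplex, every link is either contractible or a single sphere. So you never need the equivalence $\bigvee S^a\ast\bigvee S^b\simeq\bigvee S^{a+b+1}$ on which the general join lemma rests. The paper's route is more modular and yields a closure property reusable elsewhere.

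There is one bookkeeping slip to fix. With the paper's convention $\dim J=|J|-1$, the boundary of the simplex on $[n]\setminus J$ is $S^{n-|J|-2}=S^{n-3-\dim J}$, not $S^{n-2-\dim J}$. Likewise, the equality $n-|J|-2=n-2-\dim J$ is false as written.

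The slip does no damage. For a complex of dimension $n-2$, the required sphere dimension is $(n-2)-1-\dim J=n-3-\dim J$ (the definition with $n$ replaced by $n-1$). That is exactly the formula you use correctly in your last paragraph about facets. Writing $|J|$ in place of $\dim J$ in those two spots, or equivalently $n-3-\dim J$ in place of $n-2-\dim J$, makes the argument consistent.
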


\begin{proof}
The estimated dimension equals $n-2$ because $K(I)$ is a union of a non-empty set of hyperfaces of the simplex $\Delta^{n-1}$, see Construction~\ref{conMainSubcomplex}. The property of being homotopically Cohen–Macaulay can be proven in three steps. (1) A simplex is always a homotopically Cohen-Macaulay complex, (2) a boundary of a simplex is always a homotopically Cohen-Macaulay complex as well, (3) the join of homotopically Cohen-Macaulay complexes is homotopically Cohen-Macaulay as well. All 3 steps are straightforward and left as an exercise for the reader. Our last observation is that $K(I)=\dd U_{I}\ast U_{[n]\setminus I}$, according to Construction~\ref{conMainSubcomplex}, which finishes the proof.
\end{proof}

Returning back to the intersection $D_{12}$ of the subdiagrams, we have
\begin{equation}\label{eqIntersectDiagrams}
D_{12}=D_1|_{K(I_0)}=D_2|_{K(I_0)}=D|_{K(I_0)}.
\end{equation}

From the construction of $D_{12}$ and from the definition of the poset inflation it follows
\begin{equation}\label{eqPosetIntersection}
P_{D_{12}}=P_{D_1}\cap P_{D_2}.
\end{equation}

\begin{lem}\label{lemIntersectionFlabby}
The sheaf $\ca{D}_{12}$, which corresponds to the diagram $D_{12}$ on $K(I_0)$, is inhabited and flabby.
\end{lem}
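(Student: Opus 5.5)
The plan is to reduce both properties to the corresponding properties of $\ca{D}$ via Remark~\ref{remFlabbyOnOpen}. By formula~\eqref{eqIntersectDiagrams}, $D_{12}$ is the restriction of $D$ to the subposet $K(I_0)^*\subset P^*$. So everything follows once we know that $K(I_0)^*$ is an open subset of the Alexandrov space $X_{P^*}$.

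\emph{Inhabitedness.} For every $J\in K(I_0)$ we have $D_{12}(J)=D(J)\neq\varnothing$, since $\ca{D}$ is inhabited. By Proposition~\ref{propTrivialSheaf}'s reasoning (Remark~\ref{remConcreteLimSets}), values on open sets are limits of stalks, so nonemptiness of stalks alone is not quite enough. However, once openness is established, every open $U\subseteq K(I_0)$ is open in $X_{P^*}$. Then Remark~\ref{remRestrictionOfSheaf} gives $\ca{D}_{12}(U)=\ca{D}(U)$, which is nonempty because $\ca{D}$ is inhabited.

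\emph{Openness.} Open sets of $X_{P^*}$ are upper ideals of $P^*$, that is, lower ideals of $P$, or equivalently simplicial subcomplexes (Remark~\ref{remOpenSubcomplexes}). The set $K(I_0)=\{J\mid J\ngeqslant I_0\}$ is closed under taking faces: if $J'\subseteq J$ and $J'\geqslant I_0$, then $J\geqslant I_0$. Hence $K(I_0)$ is a subcomplex, which is also stated in Construction~\ref{conMainSubcomplex}. Therefore $K(I_0)^*$ is open, and the inclusion is an exact embedding, since the order is inherited from $P$.

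\emph{Flabbiness.} With openness in hand, Remark~\ref{remFlabbyOnOpen} applies directly. Every restriction map $\ca{D}_{12}(U\supseteq V)$ with $U\supseteq V$ open in $K(I_0)$ is literally the map $\ca{D}(U\supseteq V)$, and that map is surjective because $\ca{D}$ is flabby.

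The only point requiring care is the identification $D_{12}=D|_{K(I_0)}$ as a diagram and not just stalkwise. The structure maps of $D_1$ and $D_2$ off the upper cone of $I_0$ are the restrictions of the maps of $D$, and $K(I_0)$ is a lower ideal, so no map of $D$ leaves it in the direction of $P^*$. Hence the subdiagram is exactly the restricted diagram. I expect this bookkeeping step, rather than any real obstacle, to be the main point to verify.
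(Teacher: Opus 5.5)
Your proof is correct and follows the paper's argument: you identify $D_{12}$ with $D|_{K(I_0)}$, observe that $K(I_0)$ is a subcomplex and hence open in $X_{P^*}$, and use that restricting a flabby sheaf to an open set preserves flabbiness. Your inhabitedness argument, $\ca{D}_{12}(U)=\ca{D}(U)\neq\varnothing$ for every open $U\subseteq K(I_0)$, checks the definition literally; the paper only notes that the stalks over $K(I_0)$ are nonempty, which suffices only because flabbiness is also available.
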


\begin{proof}
The inhabitedness is straightforward, since the simplicial complex $K(I_0)$ is, by construction, a support of $D_{12}$ over the simplex $P=\ca{F}(\Delta^{n-1})$. The flabbiness follows from the formula~\eqref{eqIntersectDiagrams}, because $\ca{D}$ is flabby (by the assumption), $K(I_0)$ is an open set in the topology of $X_{P^*}$ (see Remark~\ref{remOpenSubcomplexes}), and a restriction of a flabby sheaf to an open set is a flabby sheaf as well (see Remark~\ref{remRestrictionOfSheaf}).
\end{proof}

Now we formulate the last useful technical lemma to gather all previous results together and finally prove Theorem~\ref{thmSimplex}.

\begin{lem}\label{lemInductionHomotopy}
Let $X_1,X_2$ be two cell subcomplexes of a CW-complex $Y$ such that $Y=X_1\cup X_2$ and two conditions hold: (1) $X_1$ is homotopy equivalent to a wedge of $k$-dimensional spheres,  $X_2$ is homotopy equivalent to a wedge of $k$-dimensional spheres; (2) the intersection $A=X_1\cap X_2$ is homotopy equivalent to a wedge of $(k-1)$-dimensional spheres. Then their union $Y$ is homotopy equivalent to a wedge of $k$-dimensional spheres.
\end{lem}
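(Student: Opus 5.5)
The plan is to treat $Y$ as a homotopy pushout and argue separately for $k\geqslant 2$, where homology and Whitehead's theorem do the work, and for $k=1$, where everything reduces to graphs. Throughout I assume $k\geqslant 1$; this is the only case needed, since the lemma is applied with $k=n-1$ and $n\geqslant 2$. Because $X_1,X_2$ are subcomplexes of the CW-complex $Y$, the inclusions $A\hookrightarrow X_\varepsilon$ are cofibrations. Hence $Y=X_1\cup_A X_2$ is a homotopy pushout of $X_1\leftarrow A\rightarrow X_2$. So we may replace $X_1$, $X_2$, $A$ by homotopy equivalent spaces, and the Mayer--Vietoris sequence and the Seifert--van Kampen theorem both apply to the cover $\{X_1,X_2\}$.

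\emph{Case $k\geqslant 2$.}

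First, $Y$ is simply connected. Indeed, $X_1,X_2$ are simply connected, being wedges of spheres of dimension $\geqslant 2$. Also $A$ is path-connected, being a wedge of spheres of dimension $k-1\geqslant 1$, and by Remark~\ref{remZeroWedge} it is nonempty. Seifert--van Kampen then gives $\pi_1(Y)=1$.

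Next, the reduced Mayer--Vietoris sequence gives the homology of $Y$:
\[
\widetilde H_i(X_1)\oplus \widetilde H_i(X_2)\to \widetilde H_i(Y)\to \widetilde H_{i-1}(A)\to \widetilde H_{i-1}(X_1)\oplus\widetilde H_{i-1}(X_2).
\]
\begin{itemize}
\item For $i\neq k$ both outer groups around $\widetilde H_i(Y)$ vanish: the $X$-terms live only in degree $k$ and the $A$-term only in degree $k-1$. So $\widetilde H_i(Y)=0$.
\item In degree $k$, using $\widetilde H_k(A)=0$ and $\widetilde H_{k-1}(X_\varepsilon)=0$, we get a short exact sequence
\[
0\to H_k(X_1)\oplus H_k(X_2)\to H_k(Y)\to H_{k-1}(A)\to 0.
\]
Both ends are free abelian, so the sequence splits and $H_k(Y)$ is free.
\end{itemize}

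Now $Y$ is simply connected with $\widetilde H_i(Y)=0$ for $i<k$, so the Hurewicz theorem gives that $Y$ is $(k-1)$-connected and $\pi_k(Y)\cong H_k(Y)$. Choose maps $S^k\to Y$ representing a basis of $H_k(Y)$. Their wedge $\bigvee S^k\to Y$ is a homology isomorphism between simply connected CW-complexes, hence a homotopy equivalence by Whitehead's theorem.

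\emph{Case $k=1$.}

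Here $A$ is a wedge of $0$-spheres, i.e.\ a nonempty finite discrete set of $m\geqslant 1$ points. Each $X_\varepsilon$ is path-connected with the homotopy type of a wedge of circles. The homotopy pushout of $X_1\leftarrow A\to X_2$ with $A$ discrete is $X_1\sqcup X_2$ with $m$ arcs attached, joining the images of each point $a\in A$ in $X_1$ and in $X_2$.
\begin{itemize}
\item Since $X_1$ and $X_2$ are path-connected, the endpoints of the arcs can be slid, via homotopy of the attaching maps, to the wedge points.
\item Collapsing one arc (a contractible subcomplex) then yields $X_1\vee X_2$ with $m-1$ loops attached at the base point.
\item This space is $X_1\vee X_2\vee\bigvee_{m-1}S^1$, a wedge of circles.
\end{itemize}

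The main obstacle is the freeness of $H_k(Y)$ in the first case. It is handled by the splitting of the short exact sequence above, which relies on $H_k(A)=0$ and on $H_{k-1}(A)$ being free. The low-dimensional case $k=1$ has to be treated separately because there $A$ is disconnected and the van Kampen argument does not apply.
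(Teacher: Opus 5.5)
Your proof is correct. The paper gives no argument of its own for this lemma beyond the pointer to \cite[Lm.~4.1]{AyzRukh}, so the useful comparison is with the standard direct argument rather than with a proof in the text.

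That argument runs as follows. Since $A$ has the homotopy type of a wedge of $(k-1)$-spheres and each $X_\varepsilon$ is $(k-1)$-connected, both inclusions $A\hookrightarrow X_\varepsilon$ are null-homotopic. Because $Y$ is a homotopy pushout (your cofibration remark), the two legs may be replaced by constant maps. The double mapping cylinder then becomes $X_1\vee X_2\vee\Sigma A$ after collapsing one arc $\{a_0\}\times[0,1]$, where the unreduced suspension $\Sigma A$ is a wedge of $k$-spheres.

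Your $k=1$ case is exactly this argument. Sliding the arc endpoints to the base points is the null-homotopy, and the $m$ arcs together form $\Sigma A\simeq\bigvee_{m-1}S^1$. It works verbatim for every $k\geqslant 1$, so the case split and the Hurewicz--Whitehead machinery are not actually needed.

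The direct route buys uniformity in $k$ and an explicit decomposition $Y\simeq X_1\vee X_2\vee\Sigma A$, with sphere count equal to the sum of the three ranks. Your homological route identifies the homotopy type only through invariants and needs simple connectivity, which is why $k=1$ must be treated separately. It does recover the same count from the split short exact sequence, and it rests only on textbook black boxes (Mayer--Vietoris, van Kampen, Hurewicz, Whitehead).
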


This statement is quite standard and easy to prove. A sufficiently detailed proof can be found in~\cite[Lm.4.1]{AyzRukh}.

\begin{proof}[Proof of Theorem~\ref{thmSimplex}]
At this point, we are ready to prove the step of the inner induction in Theorem~\ref{thmSimplex}. Consider the geometric realizations $|P_D|$, $|P_{D_1}|$, $|P_{D_2}|$ and $|P_{D_{12}}|$. They all are CW-complexes, and besides,
\[
|P_D|=|P_{D_1}|\cup |P_{D_2}|, \qquad |P_{D_{12}}|=|P_{D_1}|\cap |P_{D_2}|
\]
according to Lemma~\ref{lemUnion} and equality~\eqref{eqIntersectDiagrams}. The sheaves $\ca{D}_1$, $\ca{D}_2$ are inhabited and flabby over a simplex, and their complexity is less than the complexity of $\ca{D}$. Therefore the spaces $|P_{D_1}|$ and $|P_{D_2}|$ are homotopy equivalent to wedges of $(n-1)$-dimensional spheres, by the hypothesis of the inner induction. The space $|P_{D_{12}}|$ is the geometric realization of the inflation of simplicial poset $K(I_0)$ along the diagram $D_{12}$. The corresponding sheaf is inhabited and flabby, according to Lemma~\ref{lemIntersectionFlabby}. In addition, $K(I_0)$ is the Cohen-Macaulay simplicial poset of dimension $n-2$. Therefore, we can apply the statement $\Th_3(n-1)$ (which is already proven as follows from the induction hypothesis). The statement $\Th_3(n-1)$ implies that the space $|K(I_0)_{D_{12}}|$ is homotopy equivalent to a wedge of $(n-2)$-dimensional spheres. Now we are in position to apply Lemma~\ref{lemInductionHomotopy}: it implies that $|P_D|$ is homotopy equivalent to a wedge of $(n-1)$-dimensional spheres. The proof is complete.
\end{proof}

\section{Final remarks}\label{secFinalRemarks}

\subsection{Nondegenerate simplicial maps}
Theorem~\ref{thmMainIntro} stated in the introduction follows easily from results of Section~\ref{secFormulations}. Indeed, if $f\colon N\to K$ is a surjective simplicial map, then the corresponding sheaf $\ca{D}_f$ over the poset $(K\setminus\{\varnothing\})^*$ (see Construction~\ref{conDiagramFromMap}) is inhabited. It can be seen that the inflation $(K\setminus\{\varnothing\})_{D_f}$ is isomorphic to the original covering space $N$, therefore the three theorems of Section~\ref{secFormulations} imply the three items of Theorem~\ref{thmMainIntro} in a straightforward manner.

\subsection{Vertex inflations}
As discussed in Section~\ref{secInflation}, there are two important classes of examples of inflated posets: vertex-inflations introduced by Wachs and edge-inflations, leading to mutliclique complexes of multigraphs. According to Remarks~\ref{remVertexInflationFlabby} and~\ref{remEdgeInflationFlabby}, both classes correspond to flabby inhabited sheaves, therefore Theorems~\ref{thmSimplex}, \ref{thmGenSimpPoset}, and~\ref{thmCM} are applicable.

Speaking of vertex-inflated simplicial complexes, the theorems proved in the previous paragraph imply the following result, originally discovered by Wachs~\cite{Wachs}.

\begin{cor}
The following hold true
\begin{enumerate}
  \item For a simplex $\Delta^{n-1}$, its vertex-inflation $\Delta^{n-1}(k_1,\ldots,k_n)$ is homotopy equivalent to a wedge of $n_I=\prod_{i\in I}(k_i-1)$ many $(n-1)$-dimensional spheres, where $I=[n]$.
  \item For arbitrary connected simplicial complex $K$, its vertex-inflation $K(k_1,\ldots,k_m)$ has homotopy wedge decomposition
  \[
  |K(k_1,\ldots,k_m)|\simeq |K|\vee \bigvee_{I\in K\setminus\{\varnothing\}} (\Sigma^{\dim I+1}|\link_KI|)^{\vee n_I}.
  \]
  \item If $K$ is homotopically Cohen--Macaulay complex of dimension $n-1$, then so is its vertex inflation $K(k_1,\ldots,k_m)$.
\end{enumerate}
\end{cor}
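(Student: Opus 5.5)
The plan is to read off all three items from Theorems~\ref{thmSimplex}, \ref{thmGenSimpPoset} and~\ref{thmCM}, applied to the diagram $D_{\ver}$. The only non-formal input is the number of spheres $n(D_I)$.

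First, regard $K$ as a geometric simplicial poset and take the diagram $D_{\ver}$ of Construction~\ref{conDiagramForVertexInflation}, with $\#N_i=k_i\geqslant 1$. By Proposition~\ref{multiclique2} we have $K_{D_{\ver}}=K(k_1,\ldots,k_m)$. By Remark~\ref{remVertexInflationFlabby}, the sheaf $\ca{D}_{\ver}$ is inhabited and flabby. So the hypotheses of Theorems~\ref{thmSimplex}, \ref{thmGenSimpPoset} and~\ref{thmCM} hold. Item (3) is then literally Theorem~\ref{thmCM}. Item (2) is formula~\eqref{eqMainWedgeDecomposition} of Theorem~\ref{thmGenSimpPoset}, with $K$ connected as required, once we show $n((D_{\ver})_I)=\prod_{i\in I}(k_i-1)$.

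Second, I would identify the restricted diagram. For a simplex $I\in K$, the restriction $(D_{\ver})_I$ to $(K_{\leqslant I})^*$ is again a projection diagram, namely the vertex-inflation diagram of the simplex $K_{\leqslant I}\cong\ca{F}(\Delta^{\dim I})$ with multiplicities $(k_i)_{i\in I}$. Hence it suffices to prove item (1) for an arbitrary simplex, and then apply it to each $I$.

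Third, to prove item (1) I would not go through the induction of Section~\ref{secProofs}. Instead I would use the explicit description of the vertex-inflation of a simplex: $\Delta^{n-1}(k_1,\ldots,k_n)$ consists of all subsets of $\bigsqcup_i N_i$ containing at most one element from each $N_i$. This is exactly the simplicial join $N_1\ast N_2\ast\cdots\ast N_n$ of discrete sets. Each $N_i$ is a wedge of $k_i-1$ copies of $S^0$. The join distributes over wedges up to homotopy, and $S^0\ast\cdots\ast S^0\cong S^{n-1}$. Therefore the join is homotopy equivalent to a wedge of $\prod_i(k_i-1)$ spheres $S^{n-1}$, and this product is also zero (a contractible space, see Remark~\ref{remZeroWedge}) when some $k_i=1$. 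This proves item (1) and gives $n(D_I)=\prod_{i\in I}(k_i-1)$.

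As a consistency check, the number of spheres can alternatively be recovered from Theorem~\ref{thmSimplex} via reduced Euler characteristics: $\tilde\chi$ is multiplicative under joins up to sign, and $\tilde\chi(N_i)=k_i-1$.

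The main obstacle is only bookkeeping. One point is justifying that the homotopy equivalences of joins are compatible with the wedge decomposition; this is standard, since join with a wedge of spheres is a suspension-type operation. The other point is checking the isomorphism between the inflation and the join, which follows from the description of the simplices of $K(k_1,\ldots,k_m)$.
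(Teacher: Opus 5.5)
Your proposal is correct and follows essentially the paper's route: items (2) and (3) are read off from Theorems~\ref{thmGenSimpPoset} and~\ref{thmCM} applied to the inhabited flabby diagram $D_{\ver}$, and item (1) comes from identifying $\Delta^{n-1}(k_1,\ldots,k_n)$ with the join $N_1\ast\cdots\ast N_n$ of discrete sets, exactly as the paper remarks right after the corollary. Your observation that each restriction $(D_{\ver})_I$ is again a vertex-inflation diagram of a simplex, so that $n(D_I)=\prod_{i\in I}(k_i-1)$, spells out a step the paper leaves implicit.
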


Notice that item 1 in this case if fairly simple. The complex $\Delta^{n-1}(k_1,\ldots,k_n)$ is the simplicial join $[k_1]\ast\cdots\ast[k_n]$ of 0-dimensional complexes, and its homotopy type is a straightforward exercise in homotopy topology.

\subsection{Cliques in multigraphs and edge inflations}
Speaking of edge-inflated simplicial complexes, the theorems of the previous paragraph imply the following result, that originally appeared in~\cite{AyzRukh}.

\begin{cor}
Let $G_\mu$ be a multigraph with the underlying simple graph $G$ and multiplicity function $\mu$. Let $P=mF(G)$ be the clique complex of $G$ and $P_{D_{\edge}}=mF(G_\mu)$ --- the multiclique complex of the multigraph $G_\mu$. The following hold true
\begin{enumerate}
  \item For a full graph $K_n=\left([n],{[n]\choose 2}\right)$ and arbitrary edge multiplicity function $\mu\colon {[n]\choose 2}\to \Zo_{>0}$, the clique complex $mF((K_n)_\mu)$ of the corresponding multigraph is homotopy equivalent to a wedge of $(n-1)$-dimensional spheres.
  \item For arbitrary multigraph $G_\mu$, its clique complex $mF(G_\mu)$ has a homotopy wedge decomposition in terms of the clique complex $mF(G)$, and links of its simplices.
  \item If the clique complex $mF(G)$ of a graph is homotopically Cohen--Macaulay complex of dimension $n-1$, then so is the clique complex $mF(G_\mu)$ of multigraph $G_\mu$ supported on $G$.
\end{enumerate}
\end{cor}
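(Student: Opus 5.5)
The statement is the last corollary, on multiclique complexes. I plan to deduce it directly from Theorems~\ref{thmSimplex}, \ref{thmGenSimpPoset} and~\ref{thmCM}, using the identification of Proposition~\ref{multiclique2}: $mF(G_\mu)\cong P_{D_{\edge}}$ with $P=mF(G)$, after removing the empty simplex so that $P$ is a geometric simplicial poset. Remark~\ref{remEdgeInflationFlabby} says the sheaf $\ca{D}_{\edge}$ is inhabited and flabby. To make that remark airtight, I would first verify formula~\eqref{eqEdgeSheaf}. By Remark~\ref{remConcreteLimSets}, a section over a subcomplex $U$ is a compatible family $(v_I)_{I\in U}$ with $v_I\in\prod_{e\subset I}M_e$. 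Compatibility under the projections forces each coordinate $v_I(e)$ to equal $v_e$. So a section is exactly a choice of one element of $M_e$ for each edge $e$ of $U$. Restriction maps are then coordinate projections, hence surjective because every $M_e\neq\varnothing$, and inhabitedness is also immediate.

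For item~1, take $G=K_n$. Then $mF(K_n)$ without the empty simplex is $\ca{F}(\Delta^{n-1})$, so Theorem~\ref{thmSimplex} applies verbatim. It gives that $|P_{D_{\edge}}|=|mF((K_n)_\mu)|$ is homotopy equivalent to a wedge of $(n-1)$-spheres. For item~2, assume $G$ is connected; otherwise the clique complex splits into components and we argue componentwise, as in the remark after Theorem~\ref{thmFiberThm}. Theorem~\ref{thmGenSimpPoset} then yields
\[
|mF(G_\mu)|\simeq |mF(G)|\vee\bigvee\nolimits_{I}(\Sigma^{\dim I+1}|\link_{mF(G)}I|)^{\vee n(D_I)},
\]
where $D_I$ is the restriction of $D_{\edge}$ to the clique $I$. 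That restriction is the edge-diagram of the complete multigraph on the vertices of $I$. Optionally, $n(D_I)$ could be computed via an Euler characteristic count, but the statement does not require it. For item~3, Theorem~\ref{thmCM} applies directly.

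The only step with real content is checking the hypotheses, namely the flabbiness computation above; everything else is bookkeeping. One subtlety to watch is that $mF(G)$ is a simplicial complex but $mF(G_\mu)$ is only a simplicial poset. The theorems are stated for simplicial posets, and the Cohen--Macaulay notion for $P_D$ is the one from Subsection~\ref{subsecCMposets}, so this causes no difficulty.
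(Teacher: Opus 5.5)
Your proposal is correct and follows the paper's route: identify $mF(G_\mu)$ with the inflation $P_{D_{\edge}}$, note that $\ca{D}_{\edge}$ is inhabited and flabby (Remark~\ref{remEdgeInflationFlabby}), and apply Theorems~\ref{thmSimplex}, \ref{thmGenSimpPoset} and~\ref{thmCM} to the three items in turn. Your derivation of formula~\eqref{eqEdgeSheaf} from Remark~\ref{remConcreteLimSets} and your componentwise treatment of a disconnected $G$ simply spell out details the paper leaves implicit.
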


The paper~\cite{AyzRukh} introduces more general concept of inflation along simplices of arbitrary dimensions and proves an analogous theorem for such general inflations. This generalization is also covered by Theorems~\ref{thmSimplex},\ref{thmGenSimpPoset},\ref{thmCM} of the present paper as the sheaves corresponding to these general consecutive simplex-inflations are flabby as well.

\begin{rem}
In~\cite{AyzRukh}, a functorial version of homotopy wedge decomposition was formulated and proved. We believe that the language of sheaves proposed in the present paper gives a more transparent and straightforward way to prove such functoriality results. Hopefully, the details will be elaborated in a separate paper.
\end{rem}

\end{document}